\def\al{\alpha}
\def\be{\beta}
\def\Ga{\Gamma}
\def\De{\Delta}
\def\ep{\varepsilon}
\def\e{\varepsilon}
\def\La{\Lambda}
\def\si{\sigma}
\def\ang#1{{\langle #1 \rangle}}
\def\Z{{\mathbb Z}}
\def\N{{\mathbb N}}
\def\F{{\mathbb F}}
\def\p{{\mathfrak p}}
\def\dim{\operatorname{dim}}
\def\Ext{\operatorname{Ext}}
\def\gldim{\operatorname{gldim}}
\def\Hom{\operatorname{Hom}}
\def\injdim{\operatorname{injdim}}
\def\mnull{{\operatorname{null}}}
\def\op{\operatorname{op}}
\def\rank{\operatorname{rank}}
\def\Spec{\operatorname{Spec}}
\def\grmod{\operatorname{\mathsf{grmod}}}
\def\GrMod{\operatorname{\mathsf{GrMod}}}
\def\CM{\operatorname{\mathsf{CM}}^{\Z}}
\def\qgr{\operatorname{\mathsf{qgr}}}
\def\uCM{\operatorname{\underline{\mathsf{CM}}}^{\Z}}
\def\fdim{\operatorname{\mathsf{fdim}}}
\def\Db{\mathsf{D^b}}
\def\Kb{\mathsf{K^b}}
\def\mod{\operatorname{\mathsf{mod}}}
\def\proj{\operatorname{\mathsf{proj}}}
\def\<{\langle}
\def\>{\rangle}
\def\rnum#1{\expandafter{\romannumeral #1}}
\def\Rnum#1{\uppercase\expandafter{\romannumeral #1}}
\theoremstyle{plain} 
\newtheorem{thm}{Theorem}[section]
\newtheorem{cor}[thm]{Corollary}
\newtheorem*{thm*}{Theorem}
\newtheorem*{cor*}{Corollary}
\newtheorem{lem}[thm]{Lemma}
\theoremstyle{definition}
\newtheorem{dfn}[thm]{Definition}
\newtheorem{ex}[thm]{Example}
\newtheorem{rem}[thm]{Remark}
\numberwithin{equation}{section}
\begin{document}

\title{Skew graded $(A_\infty)$ hypersurface singularities}
\author{Kenta Ueyama}
\address{
Department of Mathematics, 
Faculty of Education,
Hirosaki University, 
1 Bunkyocho, Hirosaki, Aomori 036-8560, Japan}
\email{k-ueyama@hirosaki-u.ac.jp} 
\thanks{The author was supported by JSPS KAKENHI Grant Numbers JP18K13381 and JP22K03222.}

\subjclass[2020]{16G50, 16S38, 18G80, 05C50}
\keywords{stable category, Cohen-Macaulay module, countable Cohen-Macaulay representation type, noncommutative hypersurface, graph, adjacency matrix}

\begin{abstract}
For a skew version of a graded $(A_\infty)$ hypersurface singularity $A$, we study the stable category of graded maximal Cohen-Macaulay modules over $A$.
As a consequence, we see that $A$ has countably infinite Cohen-Macaulay representation type and is not a noncommutative graded isolated singularity.
\end{abstract}

\maketitle
\setlength{\leftmargini}{20pt}  

\section{Introduction}
Representation theory of (graded) maximal Cohen-Macaulay modules is a very active and fruitful area of research \cite{I}. One of the fundamental subjects is to determine the Cohen-Macaulay representation type of (graded) rings \cite{LW, Y}.
Let $k$ be an algebraically closed field
of characteristic different from $2$, and
let $R=\bigoplus_{i\in\N}R_i$ be an $\N$-graded commutative Gorenstein ring with $R_0=k$.
Then $R$ is said to have \emph{finite Cohen-Macaulay representation type}
(respectively, \ \emph{countable Cohen-Macaulay representation type})
if it has only finitely (respectively, countably) many indecomposable graded
maximal Cohen-Macaulay modules up to isomorphism and shift of the grading.
The following two results are well-known.

\begin{thm}[\cite{EH}]\label{A1}
Let $R =k[x_1,\dots,x_n]/(x_1^2+\dots+x_{n}^2)$ be a graded $(A_1)$ hypersurface singularity with $\deg x_i=1$.
Then $R$ has finite Cohen-Macaulay representation type.
\end{thm}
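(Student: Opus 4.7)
The plan is to reduce the classification of indecomposable graded maximal Cohen-Macaulay modules over $R$ to the representation theory of a semisimple algebra, via matrix factorizations and the Clifford algebra. First, I would invoke Eisenbud's theorem in its graded form, which identifies the stable category of graded MCM modules over a hypersurface $R = k[x_1,\dots,x_n]/(f)$ with the homotopy category of graded matrix factorizations of $f$ over $S = k[x_1,\dots,x_n]$. Thus it suffices to classify the indecomposable graded matrix factorizations of $f = x_1^2 + \cdots + x_n^2$ up to grading shift and the standard parity shift.

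The second step exploits the fact that $f$ is a nondegenerate quadratic form. There is a classical equivalence between ungraded matrix factorizations of a quadratic form $f$ and finitely generated $\Z/2$-graded modules over the Clifford algebra $C(f)$, with Clifford multiplication supplying the two maps of the factorization. Over an algebraically closed field of characteristic different from $2$, the structure of $C(f)$ for the standard form is well-known: it is isomorphic to $M_{2^m}(k)$ when $n = 2m$, and to $M_{2^m}(k) \oplus M_{2^m}(k)$ when $n = 2m+1$. In particular $C(f)$ is semisimple, with at most two isomorphism classes of simple $\Z/2$-graded modules (up to parity shift), so there are only finitely many indecomposable ungraded matrix factorizations of $f$.

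Third, since $f$ is homogeneous of degree $2$, every matrix factorization admits a graded lift obtained by making the entries of the matrices homogeneous of the appropriate degrees; any two such lifts differ by internal grading shifts, which are exactly the ambiguity allowed in the definition of finite CM representation type. Combining these three steps yields that $R$ has only finitely many indecomposable graded MCM modules up to shift of the grading.

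The main obstacle I anticipate is the bookkeeping between the internal $\Z$-grading on $S$, the parity $\Z/2$-grading that emerges from the Clifford picture, and the cohomological shift $\Omega$ on the stable category; one must check that promoting to the graded setting does not create infinitely many new indecomposables. An arguably cleaner alternative is a graded version of \emph{Kn\"orrer periodicity}: since $x_1^2 + x_2^2$ differs from $uv$ only by a linear change of variables (this is where $\operatorname{char} k \neq 2$ is used), one reduces two variables at a time and arrives at the trivial base cases $R = k$ or $R = k[x]/(x^2)$, each of which has finite CM type by inspection.
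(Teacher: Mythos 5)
This theorem is stated in the paper only as a citation to Eisenbud--Herzog (and Buchweitz--Eisenbud--Herzog); the paper gives no proof of its own, but the machinery it does use for the skew generalizations --- Smith and Van den Bergh's algebra $C(A)=A^![w^{-1}]_0$ together with the equivalence $\uCM(A)\cong \Db(\mod C(A))$ and the semisimplicity criterion of Theorem \ref{thm.ft} --- is exactly the (even) Clifford-algebra mechanism you describe, so your proposal is essentially the same approach specialized to the commutative case. Your sketch is correct: the identification of matrix factorizations of a nondegenerate quadratic form with $\Z/2$-graded Clifford modules, the semisimplicity of $C(f)\cong M_{2^m}(k)$ or $M_{2^m}(k)\times M_{2^m}(k)$ over an algebraically closed field of characteristic $\neq 2$, and the standard uniqueness (up to shift) of gradings on indecomposable modules over connected graded rings together give finite graded CM type, and your alternative via graded Kn\"orrer periodicity (terminating at the base cases $n=1,2$ rather than at $R=k$, since the empty form is not regular) is the same reduction the paper employs in Theorem \ref{thm.R1}.
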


\begin{thm}[{\cite[Theorem B]{BuGS}, \cite[Propositions 8 and 9]{AR}}] \label{Ainfty}
Let $R =k[x_1,\dots,x_n]/(x_1^2+\dots+x_{n-1}^2)$ be a graded $(A_\infty)$ hypersurface singularity with $n \geq 2$ and $\deg x_i=1$.
Then $R$ has countably infinite Cohen-Macaulay representation type.
\end{thm}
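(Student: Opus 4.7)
The plan is to combine Eisenbud's equivalence between $\uCM(R)$ and the homotopy category of graded matrix factorizations of $f = x_1^2+\cdots+x_{n-1}^2$ over $S = k[x_1,\dots,x_n]$ with a (graded) Kn\"orrer periodicity reduction, and then to conclude by an explicit classification in a low-dimensional base case.

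First, since $k$ is algebraically closed of characteristic different from $2$, a linear change of coordinates in $k[x_1,\dots,x_{n-1}]$ puts $f$ into the shape $u_1v_1+\cdots+u_mv_m$ (if $n-1$ is even) or $u_1v_1+\cdots+u_mv_m+w^2$ (if $n-1$ is odd); the variable $x_n$ does not appear in $f$ and plays the role of a free parameter. Iterating graded Kn\"orrer periodicity $\uCM(S'/(g))\simeq\uCM(S'[u,v]/(g+uv))$ peels off the hyperbolic pairs one by one and yields, up to a degree shift, a triangle equivalence between $\uCM(R)$ and $\uCM(R_0)$, where $R_0=k[x,y]/(x^2)$ if $n$ is even and $R_0=k[x,y,z]/(xy)$ if $n$ is odd.

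Second, in each base case I would exhibit an explicit countable family of pairwise non-isomorphic indecomposable graded MCM modules via matrix factorizations. For $R_0=k[x,y]/(x^2)$, these are the cokernels $M_\ell$ of
\[
\varphi_\ell=\begin{pmatrix} x & y^\ell \\ 0 & x \end{pmatrix},\qquad
\psi_\ell=\begin{pmatrix} x & -y^\ell \\ 0 & x \end{pmatrix},\qquad \ell\ge 1,
\]
together with the rank-one factorization $(x,x)$, whose cokernel is $k[y]$. Each $\varphi_\ell$ is homogeneous once appropriate degree shifts are placed on the source and target free modules, and $M_\ell$ is indecomposable because the off-diagonal entry $y^\ell$ is a non-unit that cannot be eliminated by homogeneous elementary row and column operations. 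An entirely analogous explicit list can be written down for $R_0=k[x,y,z]/(xy)$.

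Third, I would check that the $M_\ell$ are pairwise non-isomorphic up to grading shift by comparing graded invariants such as the Hilbert series, from which the parameter $\ell$ can be read off directly. Transporting the family $\{M_\ell\}_{\ell\ge 1}$ back through the Kn\"orrer equivalence then produces countably infinitely many pairwise non-isomorphic indecomposable graded MCM modules over $R$.

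The main obstacle is upgrading this construction to a \emph{complete} classification in order to rule out uncountably many indecomposables. I would handle this by a normal-form argument: every graded matrix factorization of $x^2$ (respectively $xy$) over the relevant polynomial ring can be reduced, via homogeneous row and column operations, to a direct sum of blocks of the above form together with trivial summands. Since the resulting combinatorial data are parametrized by a countable set of integers, the conclusion of countably infinite Cohen-Macaulay representation type then follows by transport along the Eisenbud--Kn\"orrer chain of equivalences.
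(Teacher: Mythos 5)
Your proposal is correct in outline, but it follows a genuinely different route from the one the paper takes. The paper does not prove Theorem \ref{Ainfty} by classifying matrix factorizations at all: it treats the theorem as the commutative special case of Corollary \ref{cor.sAinfty}, whose proof runs through the Smith--Van den Bergh algebra $C(A)$. Concretely, Theorem \ref{thm.sAinftys} (via Corollary \ref{cor.sAinftys}) identifies $\uCM(R)$ with $\Db(\mod \La)$ for $n$ even and $\Db(\mod \Ga)$ for $n$ odd, where $\La$ and $\Ga$ are the explicit finite-dimensional algebras \eqref{quiv1} and \eqref{quiv2}; the counting of indecomposables is then outsourced to Bobi\'nski--Gei\ss--Skowro\'nski's classification of derived-discrete algebras, and finite type is excluded because $C(R)$ is not semisimple (Theorem \ref{thm.ft}). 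Your argument is instead essentially the original one of Buchweitz--Greuel--Schreyer and Auslander--Reiten to whom the theorem is attributed: Eisenbud's matrix-factorization description plus graded Kn\"orrer periodicity to reduce to $k[x,y]/(x^2)$ or $k[x,y,z]/(xy)$, followed by an explicit classification. What your route buys is a concrete list of the indecomposable modules themselves; what the paper's route buys is that the hard completeness step disappears (replaced by the known structure of $\Db(\mod\La)$ and $\Db(\mod\Ga)$) and, more importantly, that the whole argument survives the passage to the skew algebras $A_\e$, where a direct normal-form analysis of matrix factorizations would be much less pleasant.

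The one place where your write-up is thin is exactly the step you flag yourself: the normal-form reduction showing that \emph{every} graded matrix factorization of $x^2$ (resp.\ $xy$) decomposes into the listed blocks. This is the real content of the cited references and is not a routine elimination of off-diagonal entries; the standard way to organize it is to regard a graded MCM module over $k[x,y]/(x^2)$ as a graded free $k[y]$-module equipped with a square-zero degree-one endomorphism and classify those pairs. As long as you either carry that out or cite it, the proof is complete; also note that distinguishing the $M_\ell$ up to shift is cleanest via the graded structure of $M_\ell/xM_\ell$ (or the Hilbert series, as you say), since a bare isomorphism of ungraded modules would not suffice for the graded statement.
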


Graded $(A_1)$ and $(A_\infty)$ hypersurface singularities play an essential role in the study of higher-dimensional standard graded Gorenstein rings of countable Cohen-Macaulay representation type; see, for example, \cite[Section 5]{St}.

From here, we turn our attention to rings that are not necessarily commutative.
In \cite{HU}, Higashitani and the author computed the stable category of graded maximal Cohen-Macaulay modules over a skew version of a graded $(A_1)$ hypersurface singularity by combinatorial methods developed by Mori and the author \cite{MUk}.

\begin{dfn}\label{dfn.spm}
A \emph{$(\pm 1)$-skew polynomial algebra} in $n$ variables is defined to be an algebra
\[ k_{\e}[x_1,\dots,x_n] := k\langle x_1, \dots, x_n \rangle /(x_ix_j -\e_{ij} x_jx_i \mid 1\leq i,j \leq n), \]
where $\e =(\e_{ij}) \in M_n(k)$ is a symmetric matrix such that $\e_{ii}=1$ for all $1\leq i \leq n$ and $\e_{ij}=\e_{ji} \in \{1, -1\}$ for all $1\leq i<j \leq n$.
\end{dfn}

It is easy to see that both $x_1^2+\dots+x_{n}^2$ and $x_1^2+\dots+x_{n-1}^2$ are regular central elements of a $(\pm 1)$-skew polynomial algebra $k_{\e}[x_1,\dots,x_n]$.

\begin{dfn}\label{dfn.A1}
\begin{enumerate}
\item A \emph{skew graded $(A_1)$ hypersurface singularity} is defined to be a graded algebra
\[ k_{\e}[x_1,\dots,x_n]/ (x_1^2+\dots+x_{n}^2),\]
where $\deg x_i=1$ for all $1\leq i \leq n$.
\item A \emph{skew graded $(A_\infty)$ hypersurface singularity} is defined to be a graded algebra
\[ k_{\e}[x_1,\dots,x_n]/ (x_1^2+\dots+x_{n-1}^2),\]
where $n \geq 2$ and $\deg x_i=1$ for all $1\leq i \leq n$.
\end{enumerate}
\end{dfn}

For a skew graded $(A_1)$ or $(A_\infty)$ hypersurface singularity $A$,
we write $\uCM(A)$ for the stable category of graded maximal Cohen-Macaulay modules over $A$.

\begin{dfn}\label{dfn.graph}
For a $(\pm 1)$-skew polynomial algebra $k_{\e}[x_1,\dots,x_n]$,
the \emph{graph $G_{\e}$ associated to $k_{\e}[x_1,\dots,x_n]$} is defined to be the graph with vertex set 
$V(G_{\e})=\{1, \dots , n\}$ and edge set $E(G_{\e})=\{ij \mid \e_{ij}=\e_{ji}=1, i \neq j \}$. 
\end{dfn}

Let $\F_2$ denote the field with two elements $0$ and $1$. 
For a matrix $M$ with entries in $\F_2$, let $\rank_{\F_2}(M)$ (respectively, $\mnull_{\F_2}(M))$ denote the rank (respectively, the nullity) of $M$ over $\F_2$.

\begin{thm}[{\cite[Theorem 1.3]{HU}}] \label{thm.sA1s}
Let $A_\e= k_{\e}[x_1,\dots,x_n]/ (x_1^2+\dots+x_{n}^2)$ be a skew graded $(A_1)$ hypersurface singularity with $\deg x_i=1$, and let $G_{\e}$ the graph associated to $k_{\e}[x_1,\dots,x_n]$. Consider the matrix
\[ \De_\e = \begin{pmatrix}  
 & & &1 \\
 &M(G_\e) & &\vdots \\
 & & &1 \\
1 &\cdots &1 &0
\end{pmatrix}  \in M_{n+1}(\F_2),\]
where $M(G_\e)$ is the adjacency matrix of $G_\e$ (with entries in $\F_2$).
Then there exists an equivalence of triangulated categories
\[\uCM(A_\e) \cong \Db(\mod k^{2^r}),\]
where $r= \mnull_{\F_2} (\De_\e)$.
\end{thm}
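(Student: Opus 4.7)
The plan is to follow the combinatorial framework developed by Mori and the author in \cite{MUk}. The starting observation is that $S = k_\e[x_1,\dots,x_n]$ is a noetherian AS-regular algebra of global dimension $n$, and $f = x_1^2+\dots+x_n^2$ is a regular central element of $S$ of degree $2$; thus $A_\e = S/(f)$ is a noncommutative graded Gorenstein quadric hypersurface of the type to which the machinery of \cite{MUk} applies. The key reduction is the resulting triangle equivalence
\[\uCM(A_\e) \cong \Db(\mod C(A_\e)),\]
where $C(A_\e)$ is an explicit finite-dimensional $k$-algebra attached to the quadric --- the noncommutative ``Clifford-type'' algebra. Once this equivalence is in place, the whole problem reduces to identifying $C(A_\e)$.

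The second, and most concrete, step is to read off a presentation of $C(A_\e)$ directly from the data $(\e,f)$. Since each $\e_{ij}\in\{\pm1\}$, the commutation pattern among $x_1,\dots,x_n$ is encoded by $\F_2$-linear data: the adjacency matrix $M(G_\e)$ records which pairs commute and which anticommute. The Clifford construction adjoins an extra generator $x_{n+1}$ playing the role of a square root of $f$; its commutation with each $x_i$ is forced by the equation $f = \sum x_j^2$ and contributes exactly the extra row and column of $1$'s appearing in $\De_\e$. A direct computation then shows that $C(A_\e)$ is presented by generators $x_1,\dots,x_{n+1}$ subject to $x_i^2 \in k^\times$ and pairwise (anti)commutation relations encoded precisely by $\De_\e \in M_{n+1}(\F_2)$.

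For the third step, a standard structure-theoretic argument identifies this semisimple algebra up to Morita equivalence with $k^{2^r}$, where $r = \mnull_{\F_2}(\De_\e)$. The centre of $C(A_\e)$ consists of the monomials $\prod x_i^{a_i}$ with $a \in \Ker \De_\e$ taken over $\F_2$; after rescaling so that each such central element squares to $1$ (using that $k$ is algebraically closed with $\fchar k\ne 2$), these produce $2^r$ orthogonal primitive central idempotents. Hence $C(A_\e)$ is Morita equivalent to $k^{2^r}$, and combining with the equivalence from \cite{MUk} yields
\[\uCM(A_\e) \cong \Db(\mod k^{2^r}),\]
as desired.

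The main obstacle is the first step: setting up the equivalence $\uCM(A_\e) \cong \Db(\mod C(A_\e))$ with a presentation of $C(A_\e)$ sharp enough for the matrix $\De_\e$ to emerge cleanly, including the subtle appearance of the extra row and column coming from the hypersurface equation. Once that is in hand, the identification of $r$ with $\mnull_{\F_2}(\De_\e)$ is essentially routine linear algebra over $\F_2$ and should proceed without serious difficulty.
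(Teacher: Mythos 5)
Your overall strategy---reduce to $\Db(\mod C(A_\e))$ via Smith--Van den Bergh and then identify $C(A_\e)$ as a Clifford-type (twisted group) algebra whose simple components are counted by $\F_2$-linear algebra---is a legitimate route, and it is genuinely different from the one taken in \cite{HU} (whose method is reproduced in Sections 2--4 of the present paper for the $(A_\infty)$ case): there one never computes the centre of $C(A_\e)$ in one shot, but instead transforms $G_\e$ by switchings and relative switchings, which preserve $C(A_\e)$ up to isomorphism and preserve $\mnull_{\F_2}(\De_\e)$ (Lemmas \ref{lem.msw} and \ref{lem.mrsw}), into a disjoint union of isolated edges and isolated vertices, and then finishes with Kn\"orrer reduction and two point reduction. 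Your first step is exactly Theorem \ref{thm.SV} and is fine.

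There are, however, two concrete problems in your second and third steps. First, the claimed presentation of $C(A_\e)$ is wrong: an algebra on $n+1$ generators with invertible squares and pairwise $\pm$-commutation relations has dimension $2^{n+1}$, whereas $\dim_k C(A_\e)=2^{n-1}$ by \cite[Lemma 5.1 (3)]{SV}. The correct presentation (cf.\ Lemma \ref{lem.c}, adapted to the $(A_1)$ case) is on the $n-1$ generators $t_i=x_1x_iw^{-1}$, $2\le i\le n$, with each $t_i^2$ a nonzero scalar and $t_it_j=-\e_{1i}\e_{ij}\e_{j1}t_jt_i$. Second, and consequently, the decisive step is not ``routine'': the commutation data of $C(A_\e)$ is an $(n-1)\times(n-1)$ alternating matrix $B$ over $\F_2$ with $B_{ij}=m_{1i}+m_{ij}+m_{1j}$ (writing $M(G_\e)=(m_{ij})$), and the number of simple components of $C(A_\e)$ (semisimple by Maschke's theorem for twisted group algebras of $(\Z/2\Z)^{n-1}$ in characteristic $\neq 2$, a point you assert but should justify) is $2^{\mnull_{\F_2}(B)}$, not visibly $2^{\mnull_{\F_2}(\De_\e)}$. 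To finish you must prove $\mnull_{\F_2}(B)=\mnull_{\F_2}(\De_\e)$, equivalently $\rank_{\F_2}(\De_\e)=\rank_{\F_2}(B)+2$, by explicit row and column operations on the bordered matrix $\De_\e$ using its first and last rows and columns as pivots (or, alternatively, establish directly that $C(A_\e)$ has the same number of blocks as the $2^{n+1}$-dimensional Clifford algebra you describe). This identity is exactly where the extra row and column of $1$'s earn their keep; your sketch treats it as automatic, so as written the formula $r=\mnull_{\F_2}(\De_\e)$ is not established.
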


It follows from Theorem \ref{thm.sA1s} that $A_\e$ has $2^{r}$ indecomposable non-projective graded maximal Cohen-Macaulay modules up to isomorphism and degree shift.
Therefore, we have the following result, which is a generalization of Theorem \ref{A1}.

\begin{cor}[{\cite[Theorem 1.3]{HU}}]  \label{cor.sA1}
Let $A_\e=k_{\e}[x_1,\dots,x_n]/ (x_1^2+\dots+x_{n}^2)$ be a skew graded $(A_1)$ hypersurface singularity. Then $A_\e$ has finite Cohen-Macaulay representation type.
\end{cor}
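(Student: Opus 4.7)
The plan is to extract the finiteness statement directly from the structural equivalence of \thmref{thm.sA1s}.

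First I would note that $k^{2^r}$ is a semisimple $k$-algebra, so every object of $\Db(\mod k^{2^r})$ is a direct sum of translates of the $2^r$ simple modules; consequently there are exactly $2^r$ isomorphism classes of indecomposable objects in $\Db(\mod k^{2^r})$ up to the translation $[1]$. Because \thmref{thm.sA1s} is a triangulated equivalence, it sends $[1]$ to the suspension functor on $\uCM(A_\e)$, which is the inverse syzygy $\Omega^{-1}$. Therefore $\uCM(A_\e)$ has only $2^r$ indecomposable objects up to the autoequivalence $\Omega^{-1}$.

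Next I would convert this into a count up to the degree shift $(1)$. Since the defining element $x_1^2 + \cdots + x_n^2$ is homogeneous of degree $2$, the standard matrix-factorization periodicity for graded hypersurfaces gives $\Omega^2 M \cong M(-2)$ for every $M \in \uCM(A_\e)$. Thus $(2) \cong \Omega^{-2}$ as autoequivalences of $\uCM(A_\e)$, so the cyclic subgroup $\langle (1)\rangle$ has index at most $2$ in the abelian group $\langle (1), \Omega^{-1}\rangle$. It follows that every $\Omega^{-1}$-orbit on indecomposables decomposes into at most two $(1)$-orbits, leaving at most $2\cdot 2^r$ non-projective indecomposable graded maximal Cohen-Macaulay modules up to degree shift. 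Together with the unique projective graded indecomposable $A_\e$ (which occupies one further degree-shift orbit), this yields finitely many indecomposable graded MCM modules up to isomorphism and grading shift, i.e., finite Cohen-Macaulay representation type.

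The main place where care is required is the bookkeeping of the two commuting autoequivalences $\Omega^{-1}$ and $(1)$; everything else is immediate from \thmref{thm.sA1s} and the semisimplicity of $k^{2^r}$. A sharper comparison of these two autoequivalences should recover the exact count of $2^r$ non-projective indecomposables asserted in the paragraph preceding the corollary, but even the crude bound above suffices for the finiteness claim.
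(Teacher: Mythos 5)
Your proposal is correct and follows essentially the same route as the paper, which simply reads off finiteness from the triangle equivalence $\uCM(A_\e)\cong\Db(\mod k^{2^r})$ of Theorem~\ref{thm.sA1s} together with semisimplicity of $k^{2^r}$. The only difference is that you make explicit the bookkeeping between the suspension $\Omega^{-1}$ and the degree shift $(1)$ via the graded matrix-factorization periodicity $\Omega^2M\cong M(-2)$, a point the paper leaves implicit in its citation of \cite[Theorem 1.3]{HU}, where the exact count of $2^r$ non-projective indecomposables up to shift is established.
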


The purpose of this paper is to investigate the stable category of graded maximal Cohen-Macaulay modules over a skew graded $(A_\infty)$ hypersurface singularity in a manner analogous to Theorem \ref{thm.sA1s}. We prove the following theorem.

\begin{thm} \label{thm.sAinftys}
Let $A_\e= k_{\e}[x_1,\dots,x_n]/ (x_1^2+\dots+x_{n-1}^2)$ be a skew graded $(A_\infty)$ hypersurface singularity with $n \geq 2$ and $\deg x_i=1$. Let $G_{\e}$ the graph associated to $k_{\e}[x_1,\dots,x_n]$. Consider the matrix
\[ \De_\e = \begin{pmatrix}  
 & & &1 \\
 &M(G_\e) & &\vdots \\
 & & &1 \\
1 &\cdots &1 &0
\end{pmatrix}  \in M_{n+1}(\F_2),\]
where $M(G_\e)$ is the adjacency matrix of $G_\e$ (with entries in $\F_2$).
Let $\bm{v}_1, \dots, \bm{v}_{n+1} \in \F_2^{n+1}$ denote the columns of $\De_\e $.

\begin{enumerate}
\item If $\bm{v}_{n}$ can be expressed as a linear combination of the other columns $\bm{v}_1, \dots, \bm{v}_{n-1}, \bm{v}_{n+1}$, then there exists an equivalence of triangulated categories
\[\uCM(A_\e) \cong \Db(\mod \La^{2^{r-1}}), \]
where $\La$ is the finite-dimensional algebra given by the quiver with relations \vspace{2mm}
\begin{align} \label{quiv1}
\xymatrix@C=2pc@R=2pc{
1 \ar@(rd,ru)_{a} 
}
\qquad a^2=0, \vspace{2mm}
\end{align}
and $r= \mnull_{\F_2} (\De_\e)$.
\item If $\bm{v}_{n}$ cannot be expressed as a linear combination of the other columns $\bm{v}_1, \dots, \bm{v}_{n-1}, \bm{v}_{n+1}$, then there exists an equivalence of triangulated categories
\[\uCM(A_\e) \cong \Db(\mod \Ga^{2^r}), \]
where $\Ga$ is the finite-dimensional algebra given by the quiver with relations \vspace{2mm}
\begin{align} \label{quiv2}
\xymatrix@C=2pc{
1 \ar@<0.75ex>[r]^{a} &2 \ar@<0.75ex>[l]^{b} 
}
\qquad ab=0, ba=0, \vspace{2mm}
\end{align}
and $r= \mnull_{\F_2} (\De_\e)$.
\end{enumerate}
\end{thm}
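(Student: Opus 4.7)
The plan is to follow the strategy developed in [HU] for the $(A_1)$ case, which reduces $\uCM$ to an explicit module category via noncommutative matrix factorizations, and to adapt it to cope with the fact that the variable $x_n$ is absent from the defining equation $f=x_1^2+\cdots+x_{n-1}^2$. The first step is to invoke the machinery of [MUk] to obtain an equivalence
\[
\uCM(A_\e)\;\cong\;\Db(\mod C_\e)
\]
for a suitable finite-dimensional $k$-algebra $C_\e$ built from the pair $(k_\e[x_1,\dots,x_n],f)$. Since $f$ is a sum of squares of generators of a $(\pm 1)$-skew polynomial algebra, $C_\e$ is of graded Clifford type, and its structure is governed by the symmetric $\F_2$-bilinear form whose Gram matrix is precisely $\De_\e$: the $n\times n$ block records the $\pm 1$-commutation data of $G_\e$, and the last row/column records the hypersurface equation $f$.

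In the $(A_1)$ situation every generator squares to a summand of $f$, so the form is nondegenerate in every coordinate direction and a standard $\F_2$-normal form argument yields $C_\e\cong k^{2^r}$ with $r=\mnull_{\F_2}(\De_\e)$, reproducing \thmref{thm.sA1s}. In the $(A_\infty)$ situation, the $n$-th direction is degenerate because $x_n^2$ does not appear in $f$: the generator of $C_\e$ corresponding to $x_n$ skew-commutes with the others in accordance with $G_\e$ but squares to zero modulo $f$. After diagonalising the remaining directions, a residual nilpotent generator remains, and the theorem's dichotomy is precisely whether this nilpotent direction can be absorbed, via an $\F_2$-linear change of basis, into a null vector of the rest of the form.

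If $\bm{v}_n$ lies in the span of the other columns $\bm{v}_1,\dots,\bm{v}_{n-1},\bm{v}_{n+1}$, then one null direction of $\De_\e$ can be combined with the degenerate $n$-th direction; this forces a single surviving nilpotent generator $a$ with $a^2=0$, so $C_\e\cong \La^{2^{r-1}}$ for $\La=k[a]/(a^2)$, yielding case (1), the drop from $r$ to $r-1$ reflecting the null direction consumed in the absorption. If $\bm{v}_n$ is independent of the other columns, the nilpotent generator remains genuinely independent and pairs with an anticommuting mate from the orthogonal complement to generate a copy of the radical-square-zero path algebra $\Ga$ of (\ref{quiv2}) on two vertices, while the full nullity $r$ survives, giving $C_\e\cong \Ga^{2^r}$ and hence case (2). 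In both cases, composing with the equivalence above produces the desired description of $\uCM(A_\e)$.

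The main obstacle I expect is the careful $\F_2$-normal form argument for the symmetric bilinear form $\De_\e$ in the presence of a distinguished degenerate coordinate, and in particular the verification that the residual algebra is literally $\La$ or $\Ga$ as presented (not merely Morita equivalent), with the correct exponent of $2$ in each case. This requires threading the skew-commutation data of $G_\e$ through the change of basis and checking compatibility with the matrix-factorization description; once this is done, the rest of the proof is essentially bookkeeping on top of the input from [HU] and [MUk].
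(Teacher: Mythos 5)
Your proposal identifies the correct architecture --- pass to $C(A_\e)$ via the Smith--Van den Bergh equivalence, then compute $C(A_\e)$ by normalizing the $\F_2$-data encoded in $\De_\e$, with the dichotomy governed by whether $\bm{v}_n$ lies in the span of the remaining columns --- and this is indeed the skeleton of the paper's argument. But the proposal stops exactly where the real work begins, and you say so yourself: the ``main obstacle'' you defer is the entire technical content of the proof. The central missing point is that an arbitrary $\F_2$-congruence of $\De_\e$ does \emph{not} lift to an isomorphism of $C(A_\e)$. By \lemref{lem.c}, $C(A_\e)$ has generators $t_2,\dots,t_n$ with relations $t_it_j+\e_{1i}\e_{ij}\e_{j1}t_jt_i$, $t_\ell^2-1$, $t_n^2$; the algebra is controlled by the sign triples $\e_{ij}\e_{jh}\e_{hi}$, not merely by the mod-$2$ adjacency data. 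The paper therefore only allows two specific moves: switching (\thmref{thm.S1}), which preserves all triple products, and relative switching (\thmref{thm.S2}), which requires vertex $1$ to be already isolated, requires $w\neq n$ so that the degenerate relation $t_n^2=0$ survives, and is realized by the concrete isomorphism $s_v\mapsto t_vt_w$. One then needs the combinatorial fact (\lemref{lem.rsw}) that these restricted moves suffice to reach the normal form $G(\alpha,\beta)$, together with \lemref{lem.msw} and \lemref{lem.mrsw} to see that they preserve both $\mnull_{\F_2}(\De_\e)$ and condition (L). None of this is supplied or even sketched in your write-up, and ``a standard $\F_2$-normal form argument'' will not produce it, because over $\F_2$ the quadratic data distinguishing $t_\ell^2=1$ from $t_n^2=0$ is not visible in the bilinear form alone.

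The second gap is the identification of the residual algebra and the exponents. Your ``absorption'' of the nilpotent direction is an assertion, not an argument. The paper obtains the product decomposition from two further inputs: the graded Kn\"orrer periodicity theorem (\thmref{thm.R1}), which strips isolated edges, and the two point reduction $C(A^{\dagger\dagger})\cong C(A^{\dagger})\times C(A^{\dagger})$ (\thmref{thm.R2}), which is responsible for every factor of $2$ in the exponent; the base cases $C\cong\La$ and $C\cong\Ga$ are then explicit computations as in \exref{ex.C(A)}, and the exponents $2^{r-1}$ versus $2^{r}$ come from the identity $r=\mnull_{\F_2}(\De(G(\alpha,\beta)))=\beta-1$ combined with whether vertex $n$ ends up isolated (condition (L) holds) or on an isolated edge (condition (L) fails). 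Your heuristic for why the nullity drops by one in case (1) is consistent with this, but without the reduction theorems and the base-case computations the claimed isomorphisms $C_\e\cong\La^{2^{r-1}}$ and $C_\e\cong\Ga^{2^{r}}$ remain unproven.
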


Now consider the case $\ep_{ij}=1$ for all $1 \leq i<j \leq n$ and $n \geq 2$. In this case, $k_{\e}[x_1,\dots,x_n]=k[x_1,\dots,x_n]$ and $G_{\e}$ is the complete graph $K_n$, so
\[ \De_\e =\begin{pmatrix}  
 & & &1 \\
 &M(K_n) & &\vdots \\
 & & &1 \\
1 &\cdots &1 &0
\end{pmatrix}
= \underbrace{\begin{pmatrix}
0&1&\cdots &1\\
1&0&\ddots &\vdots\\
\vdots &\ddots &\ddots &1\\
1&\cdots &1 &0\\
\end{pmatrix}}
_{n+1}.\]
Let $\bm{v}_1, \dots, \bm{v}_{n+1} \in \F_2^{n+1}$ be the columns of $\De_\e$.
If $n$ is even, then we have $\bm{v}_{n}=\bm{v}_1+\dots+\bm{v}_{n-1}+\bm{v}_{n+1}$ and
$\mnull_{\F_2} (\De_\e)=1$.
If $n$ is odd, then one can check that $\bm{v}_{n}$ cannot be written as a linear combination of $\bm{v}_1, \dots, \bm{v}_{n-1}, \bm{v}_{n+1}$ and that $\mnull_{\F_2} (\De_\e)=0$.
Hence Theorem \ref{thm.sAinftys} contains the following result. 

\begin{cor} \label{cor.sAinftys}
Let $R= k[x_1,\dots,x_n]/ (x_1^2+\dots+x_{n-1}^2)$ be a graded $(A_\infty)$ hypersurface singularity with $n \geq 2$ and $\deg x_i=1$. 
\begin{enumerate}
\item If $n$ is even, then there exists an equivalence of triangulated categories
$\uCM(R) \cong \Db(\mod \La)$,
where $\La$ is the finite-dimensional algebra given by the quiver with relations \eqref{quiv1}.
\item If $n$ is odd, then there exists an equivalence of triangulated categories
$\uCM(R) \cong \Db(\mod \Ga)$,
where $\Ga$ is the finite-dimensional algebra given by the quiver with relations \eqref{quiv2}.
\end{enumerate}
\end{cor}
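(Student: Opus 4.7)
The plan is to specialize Theorem \ref{thm.sAinftys} to the case $\e_{ij}=1$ for all $i<j$. In that case $k_\e[x_1,\dots,x_n]$ is the ordinary polynomial ring, the graph $G_\e$ is the complete graph $K_n$, and $M(G_\e)=J_n-I_n$ over $\F_2$ (where $J_n$ denotes the all-ones matrix). Bordering as in the statement then gives $\De_\e = J_{n+1}-I_{n+1}$, i.e.\ the $(n+1)\times(n+1)$ matrix with zero diagonal and ones off-diagonal. It remains to compute $r:=\mnull_{\F_2}(\De_\e)$ and to decide whether the $n$th column $\bm{v}_n$ lies in the $\F_2$-span of the other columns $\bm{v}_1,\dots,\bm{v}_{n-1},\bm{v}_{n+1}$.

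For the nullity I would use the following one-line argument: $(J_{n+1}-I_{n+1})\bm{w}=0$ is equivalent to $J_{n+1}\bm{w}=\bm{w}$, and since $J_{n+1}\bm{w}$ is the constant vector with value $\sum_i w_i$, the equation forces all entries of $\bm{w}$ to be equal; hence the kernel is contained in $\{\bm{0},\bm{1}\}$, and $\bm{1}$ lies in the kernel precisely when $n$ is even. So $r=1$ for $n$ even and $r=0$ for $n$ odd. For the column dependence, each $\bm{v}_j$ has a zero only in position $j$, so $\bm{v}_1+\cdots+\bm{v}_{n+1}=n\cdot\bm{1}$ in $\F_2^{n+1}$; this vanishes exactly when $n$ is even, giving the relation $\bm{v}_n=\bm{v}_1+\cdots+\bm{v}_{n-1}+\bm{v}_{n+1}$. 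When $n$ is odd the columns are forced to be independent by $r=0$, so no such relation exists.

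Finally I would substitute into Theorem \ref{thm.sAinftys}: when $n$ is even, case (1) applies with $r=1$, giving $\uCM(R)\cong \Db(\mod \La^{2^{r-1}})=\Db(\mod \La)$; when $n$ is odd, case (2) applies with $r=0$, giving $\uCM(R)\cong \Db(\mod \Ga^{2^r})=\Db(\mod \Ga)$. No serious obstacle arises here: once Theorem \ref{thm.sAinftys} is granted, the entire corollary reduces to a parity computation for $J_{n+1}-I_{n+1}$ over $\F_2$, and the only care needed is in matching the exponent $2^{r-1}$ in case (1) with $2^r$ in case (2) so that both specializations collapse to a single copy of the relevant algebra.
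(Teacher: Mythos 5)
Your proposal is correct and follows essentially the same route as the paper, which also deduces the corollary by specializing Theorem \ref{thm.sAinftys} to $G_\e=K_n$, identifying $\De_\e$ with the all-ones-off-diagonal matrix, and checking the parity-dependent nullity and column relation. The only difference is that you supply explicit one-line arguments (the $J\bm{w}=\bm{w}$ kernel computation and the column-sum identity) for facts the paper leaves as ``one can check.''
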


\begin{rem}
Corollary \ref{cor.sAinftys} also follows from 
graded Kn\"orrer's periodicity theorem and
the result of Buchweitz, Eisenbud, and Herzog \cite[Appendix]{BEH}.
In tilting theory, by Buchweitz, Iyama, and Yamaura's theorem \cite[Thorem 1.4]{BIY}, 
it is known that if $R=k[x,y]/(x^2)$ with $\deg x=\deg y=1$, then
there exists a triangle equivalence $F: \uCM_0(R) \xrightarrow{\sim} \Kb(\proj \La)$, where $\uCM_0(R)$ is the stable category of graded maximal Cohen-Macaulay $R$-modules $M$ that satisfy $M_{\p} \in \proj R_{\p}$ for all  $\p \in \Spec R$ with $\dim R_\p < \dim R$.
Furthermore, in \cite{IY}, it is shown that the equivalence $F: \uCM_0(R) \xrightarrow{\sim} \Kb(\proj \La)$ leads to an equivalence $\uCM(R) \cong  \Db(\mod \La)$.
Thus, Corollary \ref{cor.sAinftys} (1) can also be obtained from this and graded Kn\"orrer's periodicity theorem.
\end{rem}

As a consequence of Theorem \ref{thm.sAinftys}, we obtain the following corollary, which generalizes Theorem \ref{Ainfty}.

\begin{cor}\label{cor.sAinfty}
Let $A_\e=k_{\e}[x_1,\dots,x_n]/ (x_1^2+\dots+x_{n-1}^2)$ be a skew graded $(A_\infty)$ hypersurface singularity with $n \geq 2$. Then $A_\e$ has countably infinite Cohen-Macaulay representation type.
\end{cor}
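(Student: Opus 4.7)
The plan is to deduce this directly from Theorem \ref{thm.sAinftys}. That theorem provides a triangle equivalence
\[ \uCM(A_\e) \;\cong\; \Db(\mod B^m), \]
with $B \in \{\La, \Ga\}$ finite-dimensional over $k$ and $m$ a positive integer (a power of $2$). Since $B^m$ is finite-dimensional, $\Db(\mod B^m)$ is a Krull--Schmidt triangulated category with a countable skeleton. This gives the upper bound: $\uCM(A_\e)$ has at most countably many isomorphism classes of indecomposable objects, and hence $A_\e$ has at most countably many indecomposable graded maximal Cohen--Macaulay modules up to isomorphism and grading shift.

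For the lower bound I would exhibit an infinite family of indecomposables already inside $\Db(\mod B)$. Both $\La$ and $\Ga$ are self-injective algebras of infinite global dimension (they are in fact Nakayama algebras with radical square zero), so the minimal projective resolution of any simple $B$-module is eventually periodic and infinite; its Verdier truncations at increasing lengths form an infinite family of pairwise non-isomorphic indecomposable complexes in $\Db(\mod B)$, distinguished, say, by their cohomological amplitudes. Transported through the equivalence, these objects produce infinitely many pairwise non-isomorphic indecomposable graded MCM $A_\e$-modules.

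The main obstacle I expect is verifying that this infinitude survives after quotienting by the grading-shift autoequivalence $(1)$. Under the equivalence, $(1)$ corresponds to some autoequivalence $\sigma$ of $\Db(\mod B^m)$. Because $A_\e$ is a graded hypersurface of degree $2$, the standard $2$-periodicity of matrix factorizations yields the relation $\sigma^2 \cong \Si^{-2}$ in $\uCM(A_\e)$, so $\sigma$ commutes with $\Si$ and acts on complexes with only a bounded distortion of cohomological amplitude. Consequently, the truncation family constructed above, whose members have strictly increasing amplitudes, lies in distinct $\sigma$-orbits, giving countably infinitely many indecomposable graded MCM $A_\e$-modules modulo isomorphism and grading shift. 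This completes the argument.
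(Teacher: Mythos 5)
Your upper bound has a genuine gap. You assert that because $B^m$ is finite-dimensional, $\Db(\mod B^m)$ ``is a Krull--Schmidt triangulated category with a countable skeleton.'' Krull--Schmidt is fine, but countability of the indecomposables does not follow from finite-dimensionality: over an uncountable algebraically closed field such as $\C$, any finite-dimensional algebra that is not derived discrete admits one-parameter families of indecomposable complexes with fixed cohomology dimension vector, hence uncountably many indecomposables in $\Db$ (this is essentially Vossieck's characterization of discreteness; note that even representation-finiteness of $B$ does not suffice, e.g.\ $k[x]/(x^3)$ is representation-finite but not derived discrete). The countability you need is a nontrivial property of the specific algebras $\La$ and $\Ga$, namely that they are derived-discrete; this is exactly what the paper imports from Bobi\'nski--Gei\ss--Skowro\'nski (Theorems A and B of \cite{BoGS}). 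Without that input your upper bound is unproved.

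Your lower bound also diverges from the paper and is left incomplete. The paper disposes of it in two lines: if $A_\e$ had finite CM representation type, then $C(A_\e)$ would be semisimple by Theorem \ref{thm.ft}, contradicting the fact that $\La^{2^{r-1}}$ and $\Ga^{2^r}$ are not semisimple. Your route --- truncations of the periodic projective resolution of a simple, distinguished by amplitude, then an orbit count under the shift autoequivalence $\sigma$ --- can probably be completed (the truncated complexes are string complexes over these gentle algebras, hence indecomposable, and granting $\sigma^2\cong\Sigma^{\pm 2}$ each $\sigma$-orbit meets at most two amplitudes), but you do not verify either the indecomposability of the truncations or the relation $\sigma^2\cong\Sigma^{\pm2}$ in this noncommutative graded setting, and both require proof. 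I would recommend replacing the upper bound justification by the citation to \cite{BoGS} and the lower bound by the semisimplicity criterion of Theorem \ref{thm.ft}.
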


In addition, Corollary \ref{cor.sAinfty} implies the following conclusion.
 \begin{cor}\label{cor.sAinfty2}
Let $A_\e=k_{\e}[x_1,\dots,x_n]/ (x_1^2+\dots+x_{n-1}^2)$ be a skew graded $(A_\infty)$ hypersurface singularity with $n \geq 2$. Then $A_\e$ is not a noncommutative graded isolated singularity; that is, the category $\qgr A_\e$ has infinite global dimension.
 \end{cor}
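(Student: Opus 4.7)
The plan is to derive a contradiction from the assumption that $A_\e$ is a noncommutative graded isolated singularity, by combining a structural result about stable categories of noncommutative graded Gorenstein isolated singularities with the explicit equivalence supplied by Theorem~\ref{thm.sAinftys}. I would first observe that $A_\e$ is noetherian and AS-Gorenstein, as it is the quotient of the noetherian AS-regular $(\pm 1)$-skew polynomial algebra $k_\e[x_1,\dots,x_n]$ by the central regular element $x_1^2+\cdots+x_{n-1}^2$ of degree $2$.

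The key input is an Auslander-type theorem from the theory of noncommutative graded isolated singularities (in the spirit of Mori-Ueyama \cite{MUk} and companion works on noncommutative graded Gorenstein isolated singularities): for a noetherian AS-Gorenstein noncommutative graded isolated singularity $A$, the stable category $\uCM^\Z(A)$ admits a Serre functor, and in fact a tilting object whose endomorphism ring is a finite-dimensional algebra of finite global dimension. If $A_\e$ were an isolated singularity, this would yield a triangle equivalence $\uCM^\Z(A_\e)\cong \Db(\mod B)$ for some finite-dimensional $k$-algebra $B$ with $\gldim B<\infty$.

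On the other hand, Theorem~\ref{thm.sAinftys} supplies a second triangle equivalence $\uCM^\Z(A_\e)\cong \Db(\mod C)$, where $C=\La^{2^{r-1}}$ in case~(1) and $C=\Ga^{2^r}$ in case~(2). A direct computation of the minimal projective resolutions of the simple $\La$- and $\Ga$-modules shows that both $\La$ and $\Ga$ are self-injective with periodic infinite resolutions of the simples (period $1$ over $\La$, period $2$ over $\Ga$), so $\gldim C=\infty$. By Happel's theorem, finite global dimension is a derived invariant for finite-dimensional $k$-algebras; equivalently, $\Db(\mod C)$ admits no Serre functor when $\gldim C=\infty$. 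Either phrasing contradicts the previous paragraph, proving that $A_\e$ cannot be a noncommutative graded isolated singularity. This is consistent with Corollary~\ref{cor.sAinfty}: countable infinite Cohen-Macaulay representation type is incompatible with the existence of a tilting-theoretic (finite global dimension) description of $\uCM^\Z(A_\e)$.

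The principal obstacle is locating the precise form of the ``AS-Gorenstein noncommutative graded isolated singularity $\Rightarrow$ $\uCM^\Z$ has a Serre functor (or equivalently a tilting object with finite global dimension endomorphism algebra)'' result and verifying that its hypotheses apply to $A_\e$; once this ingredient is in hand, the derived-invariance contradiction via Happel's theorem is routine. An alternative, essentially equivalent route is to use Reiten--Van den Bergh to phrase the obstruction in terms of the non-existence of Auslander--Reiten triangles in $\Db(\mod C)$ for $\gldim C=\infty$.
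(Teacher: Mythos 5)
Your argument has the right shape (derive a contradiction between ``isolated singularity'' and the explicit description of $\uCM(A_\e)$), but its load-bearing first step is exactly the ingredient you admit you have not pinned down, and in the form you state it, it is too strong. There is no general theorem that a noetherian AS-Gorenstein noncommutative graded isolated singularity admits a tilting object in $\uCM(A)$ with finite-global-dimension endomorphism algebra; results of that type require extra hypotheses. The weaker Serre-functor version (via Auslander--Reiten duality for noncommutative graded isolated singularities, as in \cite{U}) is closer to being usable, but it too carries hypotheses (e.g.\ on the dimension of $A$, which here is $n-1$ and can be $1$) that you would have to verify, and none of this machinery is developed or cited in the paper. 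The downstream steps are fine: $\gldim\La=\gldim\Ga=\infty$ since the simples have periodic infinite resolutions, and finiteness of global dimension (equivalently, existence of a Serre functor on $\Db(\mod C)$, by Happel and Reiten--Van den Bergh) is a derived invariant. So the gap is concentrated entirely in the unproved ``isolated $\Rightarrow$ Serre functor/tilting object'' input.

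The paper avoids this entirely: Theorem \ref{thm.ft} already records, for algebras of the form $A=S/(f)$ treated here, the equivalence of (1) finite Cohen--Macaulay representation type, (2) being a noncommutative graded isolated singularity, and (3) semisimplicity of $C(A)$. Corollary \ref{cor.sAinfty} shows $A_\e$ does not have finite Cohen--Macaulay representation type, so (1) fails and hence (2) fails --- a one-line deduction. Equivalently, you could argue directly that $C(A_\e)\cong\La^{2^{r-1}}$ or $\Ga^{2^r}$ is not semisimple and invoke (2)$\Leftrightarrow$(3). If you replace your ``Auslander-type theorem'' with Theorem \ref{thm.ft}, your contradiction argument collapses to the paper's proof; as written, it is not complete.
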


This paper is organized as follows.
In Section \ref{sec.pre}, some basic definitions and fundamental results are stated.
In Section \ref{sec.com}, the stable categories of graded maximal Cohen-Macaulay modules over skew graded $(A_\infty)$ hypersurface singularities are studied combinatorially.
Proofs of Theorem \ref{thm.sAinftys} and Corollaries \ref{cor.sAinfty} and \ref{cor.sAinfty2} are given in Section \ref{sec.p}.

\section{Preliminaries}\label{sec.pre}
Throughout this paper, $k$ is an algebraically closed field of characteristic different from $2$,
and all algebras are over $k$.

\subsection{Stable categories of graded Maximal Cohen-Macaulay modules}
For an $\N$-graded algebra $A$,
we write $\GrMod A$ for the category of graded right $A$-modules with $A$-module homomorphisms of degree zero, and $\grmod A$ for the full subcategory consisting of finitely generated graded modules.
For a graded module $M \in \GrMod A$ and an integer $s\in \Z$,
we define the \emph{shift} $M(s) \in \GrMod A$ to be the graded module with $i$-th degree component $M(s)_i = M_{s+i}$.
For $M, N\in \GrMod A$, we write $\Ext^i_{\GrMod A}(M, N)$ for the extension group in $\GrMod A$, and define $\Ext^i_A(M, N):=\bigoplus _{s \in \Z}\Ext^i_{\GrMod A}(M, N(s))$.

Let $A$ a noetherian $\N$-graded algebra.
Let $\qgr A = \grmod A/ \fdim A$ denote the quotient category of $\grmod A$ by the Serre subcategory $\fdim A$ of finite-dimensional modules. The category $\qgr A$ plays the role of (the category of coherent sheaves on) the noncommutative projective scheme associated to $A$; see \cite{AZ}.
A noetherian $\N$-graded algebra $A$ is called a \emph{noncommutative graded isolated singularity} if $\qgr A$ has finite global dimension; see \cite{U}.

Recall that an $\N$-graded algebra $A=\bigoplus_{i \in \N} A_i$ is said to be \emph{connected graded} if $A_0=k$. 
Let $A$ be a noetherian connected graded algebra. Then $A$ is called an \emph{AS-regular} (respectively, \emph{AS-Gorenstein}) algebra of dimension $d$ if
\begin{itemize}
\item{} $\gldim A = d <\infty$ (respectively, $\injdim_A A = \injdim_{A^{\op}} A= d <\infty$), and
\item{} $\Ext^i_A(k ,A) \cong \Ext^i_{A^{\op}}(k, A) \cong
\begin{cases}
k(\ell) & \text { if } i=d\\
0 & \text { if } i\neq d
\end{cases}$ for some $\ell \in \Z$.
\end{itemize}

Let $A$ be a noetherian AS-Gorenstein algebra.
We call $M \in \grmod A$ \emph{graded maximal Cohen-Macaulay} if $\Ext^i_A(M ,A) =0$ for all $i \neq 0$.
We write $\CM(A)$ for the full subcategory of $\grmod A$ consisting of graded maximal Cohen-Macaulay modules. Then $\CM(A)$ is a Frobenius category.
The \emph{stable category} of graded maximal Cohen-Macaulay modules, denoted by $\uCM(A)$, has the same objects as $\CM(A)$,
and the morphism space is given by
\[ \Hom_{\uCM(A)}(M, N) = \Hom_{\CM(A)}(M,N)/P(M,N), \]
where $P(M,N)$ consists of degree zero $A$-module homomorphisms factoring through a graded projective module. By \cite{Hap}, $\uCM(A)$ canonically has a structure of triangulated category.

\subsection{The algebra $C(A)$}
The main algebraic framework used in this paper is due to Smith and Van den Bergh \cite{SV}, which was originally developed by Buchweitz, Eisenbud, and Herzog \cite{BEH}.

Let $S$ be a $d$-dimensional noetherian AS-regular algebra with Hilbert series $(1-t)^{-d}$. Then $S$ is Koszul by \cite[Theorem 5.11]{S}.
Let $f \in S$ be a homogeneous regular central element of degree $2$, and let $A =S/(f)$.
Then $A$ is a $(d-1)$-dimensional noetherian AS-Gorenstein algebra.
Moreover, $A$ is Koszul by \cite[Lemma 5.1 (1)]{SV}, and there exists a homogeneous central regular element $w \in A^!_2$ such that $A^!/(w) \cong S^!$ by \cite[Lemma 5.1 (2)]{SV}.
We can define the algebra
\[ C(A) := A^![w^{-1}]_0. \]
By \cite[Lemma 5.1 (3)]{SV}, we have $\dim_k C(A) = \dim_k (S^!)^{(2)} = 2^{d-1}$.

\begin{thm}[{\cite[Proposition 5.2]{SV}}] \label{thm.SV}
With notation as above,  we have an equivalence $\uCM(A) \cong \Db(\mod C(A))$, where $\Db(\mod C(A))$ denotes the bounded derived category of finite-dimensional modules over $C(A)$.
\end{thm}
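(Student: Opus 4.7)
The plan is to combine Buchweitz's equivalence for the singularity category with Koszul duality, following the overall strategy of Smith--Van den Bergh. Since $A$ is a noetherian AS-Gorenstein algebra, Buchweitz's theorem provides a triangle equivalence $\uCM(A) \simeq \Db(\grmod A)/\perf(A)$, reducing the task to realizing this Verdier quotient as $\Db(\mod C(A))$.

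Because $A$ is Koszul, I would use the Koszul duality functor between (suitable subcategories of) graded modules over $A$ and $A^!$, implemented by the Koszul bimodule complex. Under this duality, perfect $A$-complexes should correspond precisely to the $w$-torsion graded $A^!$-modules: a perfect complex over $A$ has a finite Koszul-length resolution, so its Koszul transform is annihilated by a sufficiently high power of the central degree-$2$ element $w \in A^!$. The Verdier quotient by $w$-torsion is then the same as localizing $A^!$ at the regular central element $w$, producing the $\Z$-graded algebra $A^![w^{-1}]$, in which $w$ becomes an invertible central element of degree $2$. Since $w$ induces an autoequivalence shifting the grading by $2$, the category of $\Z$-graded $A^![w^{-1}]$-modules is equivalent, via the degree-zero functor, to modules over $C(A) = A^![w^{-1}]_0$, and passing to bounded derived categories yields $\Db(\mod C(A))$.

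The main technical obstacle is the precise matching between $\perf(A)$ on the one side and $w$-torsion modules on the Koszul dual side — in other words, verifying that the Koszul duality functor descends to an equivalence between the Verdier quotient and the localization at $w$. Here the finite-dimensionality of $A^!/(w) \cong S^!$, guaranteed by the Hilbert series hypothesis $H_S(t) = (1-t)^{-d}$, is the crucial finiteness input: it controls the Koszul-dual module categories and forces $C(A)$ to be finite-dimensional of dimension $\dim_k (S^!)^{(2)} = 2^{d-1}$, so that $\Db(\mod C(A))$ is the right-sized target for the equivalence.
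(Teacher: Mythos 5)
This theorem is quoted directly from \cite[Proposition 5.2]{SV}; the paper supplies no proof of its own, only the citation. Your sketch follows the same route as Smith--Van den Bergh's original argument (Buchweitz's equivalence for the graded singularity category, Koszul duality matching perfect complexes with $w$-torsion modules since $A^!/(w)\cong S^!$ is finite-dimensional, localization at the central element $w$, and passage to the degree-zero part of the strongly graded ring $A^![w^{-1}]$), so it is essentially the same approach as the source the paper relies on.
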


\begin{thm}[{\cite[Theorem 5.5]{MUk}; see also \cite[Proposition 5.2]{SV}, \cite[Theorem 6.3]{HY}}] \label{thm.ft}
With notation as above, the following are equivalent.
\begin{enumerate}
\item $A$ has finite Cohen-Macaulay representation type.
\item $A$ is a noncommutative graded isolated singularity.
\item $C(A)$ is semisimple.
\end{enumerate}
\end{thm}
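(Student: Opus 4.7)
The plan is to prove the three-way equivalence by the cycle (3) $\Rightarrow$ (1) $\Rightarrow$ (2) $\Rightarrow$ (3), using the Smith--Van den Bergh equivalence $\uCM(A) \cong \Db(\mod C(A))$ of Theorem \ref{thm.SV} as the principal tool. As a preliminary step, I would identify the internal degree shift $(1)$ on $\uCM(A)$ with the suspension $[1]$ on $\Db(\mod C(A))$. This correspondence is built into the construction $C(A) = A^![w^{-1}]_0$: inverting the degree-$2$ central regular element $w$ of the Koszul dual $A^!$ collapses the internal $\Z$-grading to $\Z/2$, and the odd-degree component $A^![w^{-1}]_1$ realises the suspension at the level of $\mod C(A)$.

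For (3) $\Rightarrow$ (1), once the shift correspondence is in place, it suffices to observe that for a finite-dimensional algebra $C$ over the algebraically closed field $k$, $\Db(\mod C)$ has finitely many indecomposables modulo $[1]$ iff $C$ is semisimple. If $C$ is semisimple then $C \cong k^m$ for some $m$, so every indecomposable of $\Db(\mod C)$ is a shift of a simple, giving exactly $m$ indecomposables modulo $[1]$; translating back through Theorem \ref{thm.SV}, $A$ has finite Cohen--Macaulay representation type. For (1) $\Rightarrow$ (2), I would invoke the standard principle, valid for noetherian AS-Gorenstein algebras, that finite CM type forces $\qgr A$ to have finite global dimension: one constructs an Auslander-type algebra from the finitely many indecomposables in $\uCM(A)$ and transfers its finite global dimension to $\qgr A$ via the Orlov-style relation between $\Db(\qgr A)$ and $\uCM(A)$. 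For (2) $\Rightarrow$ (3), I would use the direct comparison between $\qgr A$ and $\mod C(A)$ developed in \cite{SV} and \cite{MUk}: the global dimension of $\qgr A$ is controlled by the homological behaviour of $C(A)$ through an explicit description of Ext groups in $\qgr A$ in terms of the Koszul dual $A^!$, and in particular finite global dimension of $\qgr A$ forces $C(A)$ to have global dimension $0$, i.e.\ to be semisimple.

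The main obstacle I anticipate is threefold: the careful bookkeeping of the degree-shift functor under the Smith--Van den Bergh equivalence, the verification of the Auslander-style implication (1) $\Rightarrow$ (2) in the noncommutative graded setting, and the explicit link between the global dimension of $\qgr A$ and the semisimplicity of $C(A)$ required for (2) $\Rightarrow$ (3). All three ingredients are available in \cite{MUk}, \cite{SV}, and \cite{HY}; my role is to assemble them into the cycle above, with the key technical content being the semisimplicity dichotomy for $\Db(\mod C(A))$ under the shift $[1]$ and its translation back to degree-shifted indecomposables in $\uCM(A)$.
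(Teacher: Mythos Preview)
The paper does not give its own proof of this statement: Theorem~\ref{thm.ft} is recorded as a preliminary result, with the proof outsourced to \cite[Theorem~5.5]{MUk} (and the companion results in \cite{SV}, \cite{HY}). So there is nothing in the present paper to compare your argument against; your proposal is effectively a sketch of how the cited references establish the equivalence.

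As a sketch it is broadly on the right track, but two points deserve tightening. First, your identification of the internal degree shift $(1)$ with the suspension $[1]$ under the equivalence of Theorem~\ref{thm.SV} is not quite accurate. What one actually has, from the $2$-periodicity of matrix factorisations for a degree-$2$ relation $f$, is $\Omega^2 M \cong M(-2)$ on $\CM(A)$, i.e.\ $(2)\cong[2]$ on $\uCM(A)$; there is no reason in general for $(1)$ and $[1]$ to coincide. This does not damage your (3)$\Rightarrow$(1), since finiteness of indecomposables modulo $[1]$ and modulo $(1)$ differ by at most a factor of two, but you should state the periodicity correctly. Second, your (1)$\Rightarrow$(2) and (2)$\Rightarrow$(3) are placeholders rather than arguments: the ``Auslander-type'' step and the passage from $\gldim(\qgr A)<\infty$ to semisimplicity of $C(A)$ are precisely where the work in \cite{MUk} and \cite{SV} lies, and neither follows from a one-line invocation. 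If you intend to reproduce the proof rather than cite it, those two implications need genuine content (for instance, the explicit description of $\Ext$ in $\qgr A$ via the Koszul dual, and the comparison of the singularity category with $\uCM(A)$).
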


\subsection{Graphs}
A \emph{graph} $G$ consists of a set of vertices $V(G)$ and a set of edges $E(G)$ between two vertices. 
In this paper, we always assume that $V(G)$ is a finite set and $G$ has neither loops nor multiple edges. 
An edge between two vertices $v, w\in V(G)$ is written by $vw \in E(G)$. 
For a vertex $v \in V(G)$, let $N_G(v)=\{u \in V(G) \mid uv \in E(G)\}$.
A graph $G'$ is called the \emph{induced subgraph} of $G$ induced by $V' \subset V(G)$ if $vw \in E(G')$ whenever $v,w \in V'$ and $vw \in E(G)$. 
For a subset $W \subset V(G)$, we denote by $G \setminus W$ the induced subgraph of $G$ induced by $V(G) \setminus W$. 

\begin{dfn}
\begin{enumerate}
\item We say that $v$ is a \emph{isolated vertex} of a graph $G$ if
$v$ is a vertex of $G$ such that $N_G(v)=\emptyset$.
\item We say that $vw$ is an \emph{isolated edge} of a graph $G$ if
$vw$ is an edge of $G$ such that $N_G(v)=\{w\}$ and $N_G(w)=\{v\}$. 
\end{enumerate}
\end{dfn} 

Now let us focus on the notions of switching and relative switching of graphs.

\begin{dfn}[{\cite[Section 11.5]{GR}}]
Let $G$ be a graph and $v\in V(G)$. 
The \emph{switching} $\mu_v(G)$ of $G$ at $v$ is defined to be the graph $\mu_v(G)$ with $V(\mu_v(G))=V(G)$ and 
\[E(\mu_v(G))=\{ vw \mid w \in V(G) \setminus N_G(v) \} \cup E(G \setminus \{v\}).\]
For $v, w \in V(G)$, we define $\mu_{w}\mu_{v}(G):= \mu_{w}(\mu_{v}(G))$.
\end{dfn}

\begin{rem}
\begin{enumerate}
\item The same notion is called mutation in \cite{MUk} and \cite{HU}.
\item For $v \in V(G)$, we have $\mu_{v}\mu_{v}(G)=G$. For $v, w \in V(G)$, we have $\mu_{w}\mu_{v}(G)=\mu_{v}\mu_{w}(G)$.
\end{enumerate}
\end{rem}

\begin{ex}
If
$G= \xy /r2pc/: 
{\xypolygon5{~={90}~*{\xypolynode}~>{}}},
"1";"3"**@{-},
"1";"5"**@{-},
"2";"4"**@{-},
"3";"4"**@{-},
"4";"5"**@{-},
\endxy$,
then
$\mu_1(G)= \xy /r2pc/: 
{\xypolygon5{~={90}~*{\xypolynode}~>{}}},
"1";"2"**@{-},
"1";"4"**@{-},
"2";"4"**@{-},
"3";"4"**@{-},
"4";"5"**@{-},
\endxy$.
\end{ex}

\begin{dfn}[{\cite[Definition 6.6]{MUk}}]
Let $v,w \in V(G)$ be distinct vertices.
Then the \emph{relative switching} $\mu_{v \leftarrow w}(G)$ of $G$ at $v$ with respect to $w$ is defined to be the graph $\mu_{v \leftarrow w}(G)$ with $V(\mu_{v \leftarrow w}(G))=V(G)$ and
\begin{align*}
E(\mu_{v \leftarrow w}(G))
=\{vu \mid u \in N_G(w) \setminus N_G(v)\} \cup \{vu \mid u \in N_G(v) \setminus N_G(w)\} \cup E(G \setminus \{v\}).
\end{align*}
For $v, v' w, w' \in V(G)$ with $v \neq w$ and $v' \neq w'$, we define $\mu_{v' \leftarrow w'}\mu_{v \leftarrow w}(G):= \mu_{v' \leftarrow w'}(\mu_{v \leftarrow w}(G))$.
\end{dfn}

\begin{rem}
\begin{enumerate}
\item In the original paper \cite{MUk}, relative switching is called relative mutation.
\item For distinct $v, w \in V(G)$, we have $\mu_{v \leftarrow w}\mu_{v \leftarrow w}(G) =G$.
\end{enumerate}
\end{rem}

\begin{ex}
If
$G= \xy /r2pc/: 
{\xypolygon6{~={90}~*{\xypolynode}~>{}}},
"1";"4"**@{-},
"1";"5"**@{-},
"2";"3"**@{-},
"2";"4"**@{-},
"3";"5"**@{-},
"4";"5"**@{-},
\endxy$,
then
$\mu_{1\leftarrow 2}(G)= \xy /r2pc/: 
{\xypolygon6{~={90}~*{\xypolynode}~>{}}},
"1";"3"**@{-},
"1";"5"**@{-},
"2";"3"**@{-},
"2";"4"**@{-},
"3";"5"**@{-},
"4";"5"**@{-},
\endxy$.
\end{ex}

For a graph $G$, let $M(G)$ denote the adjacency matrix of $G$ with entries in $\F_2$.  Define
\[
\De(G) =\begin{pmatrix} 
 & & &1 \\
 &M(G) & &\vdots \\
 & & &1 \\
1 &\cdots &1 &0
\end{pmatrix} \in M_{n+1}(\F_2).
\]

\begin{dfn}
For a matrix $M \in M_{n+1}(\F_2)$, we say that \emph{$M$ satisfies the condition \textnormal{(L)}} if the $n$-th column $\bm{v}_{n} \in \F_2^{n+1}$ of $M$ can be expressed as a linear combination of the other columns $\bm{v}_1, \dots, \bm{v}_{n-1}, \bm{v}_{n+1} \in \F_2^{n+1}$ of $M$.
\end{dfn}

\begin{lem} \label{lem.msw}
Let $G$ be a graph with $V(G)=\{1,\dots,n\}$.
If $G'=\mu_{v}(G)$ for some $v \in  V(G)$, then 
\begin{enumerate}
\item $\mnull_{\F_2} (\De(G)) = \mnull_{\F_2} (\De(G'))$, and
\item $\De(G)$ satisfies the condition (L) if and only if so does $\De(G')$.
\end{enumerate}
\end{lem}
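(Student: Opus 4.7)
The plan is to reformulate both the nullity and the condition (L) in terms of $\ker \Delta(G)$, and then to exhibit an explicit $\F_2$-linear involution on $\F_2^{n+1}$ that restricts to a bijection $\ker \Delta(G) \xrightarrow{\sim} \ker \Delta(\mu_v(G))$ while fixing the first $n$ coordinates. Once such an involution is in hand, both statements follow at once.

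First I would translate the switching operation into matrix form. A direct check shows that, over $\F_2$,
\[ M(\mu_v(G)) = M(G) + \bm{e}_v \bm{1}^T + \bm{1} \bm{e}_v^T, \]
where $\bm{1} \in \F_2^n$ is the all-ones vector and $\bm{e}_v$ is the $v$-th standard basis vector (the double contribution at position $(v,v)$ cancels over $\F_2$, preserving the zero diagonal). Splitting any $\bm{x} \in \F_2^{n+1}$ as $(\bm{x}', t)^T$ with $\bm{x}' \in \F_2^n$ and $t \in \F_2$, the equation $\Delta(G) \bm{x} = 0$ unfolds as the pair of conditions $M(G) \bm{x}' + t \bm{1} = 0$ and $\bm{1}^T \bm{x}' = 0$.

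The key observation is that the map $\phi \colon \F_2^{n+1} \to \F_2^{n+1}$ defined by $\phi(\bm{x}', t) = (\bm{x}', t + x_v)$ is an $\F_2$-linear involution that carries $\ker \Delta(G)$ bijectively onto $\ker \Delta(\mu_v(G))$. The verification is a direct substitution: on the hyperplane $\bm{1}^T \bm{x}' = 0$, the extra term $\bm{e}_v(\bm{1}^T \bm{x}')$ coming from the perturbation of $M(G)$ vanishes, while the shift $t \mapsto t + x_v$ absorbs the remaining change $\bm{1}\, x_v$. This yields $\Delta(\mu_v(G)) \phi(\bm{x}) = \Delta(G) \bm{x}$ for all $\bm{x}$ with $\bm{1}^T \bm{x}' = 0$, and part~(1) follows immediately since $\phi$ is bijective.

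For part~(2), I would first rewrite condition (L) as: there exists $\bm{x} \in \ker \Delta(G)$ with $x_n = 1$. Indeed, a linear relation $\bm{v}_n = \sum_{i \neq n} a_i \bm{v}_i$ among the columns corresponds exactly to the kernel vector $(a_1, \dots, a_{n-1}, 1, a_{n+1})^T$. Since $\phi$ preserves the first $n$ coordinates, this existence statement transfers directly between $\ker \Delta(G)$ and $\ker \Delta(\mu_v(G))$, giving~(2). The main obstacle is identifying the correct involution $\phi$; once it is written down, everything is a one-line substitution. A more naive route, trying to realize $\Delta(\mu_v(G))$ as a congruence $P \Delta(G) P^T$ with a rank-one elementary matrix $P = I + \tilde{\bm{e}}_v \bm{x}^T$, would require solving $\Delta(G) \bm{x} = \bm{1} - \bm{e}_v$, which need not have a solution; working directly with the kernel bypasses that obstruction.
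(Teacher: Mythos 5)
Your argument is correct: the identity $M(\mu_v(G))=M(G)+\bm{e}_v\bm{1}^T+\bm{1}\bm{e}_v^T$ over $\F_2$ holds, the map $\phi(\bm{x}',t)=(\bm{x}',t+x_v)$ does intertwine the two kernels (both of which lie in the hyperplane $\bm{1}^T\bm{x}'=0$ cut out by the last row), and the reformulation of condition (L) as the existence of a kernel vector with $x_n=1$ is exactly right, so (2) follows because $\phi$ fixes the first $n$ coordinates. However, this is much closer to the paper's proof than your closing remark suggests. The paper's entire proof is the congruence identity $(E+E_{v,n+1})\,\De(G)\,(E+E_{n+1,v})=\De(\mu_v(G))$, i.e.\ $P\De(G)P^T=\De(\mu_v(G))$ with $P=E+E_{v,n+1}$; your involution $\phi$ is precisely multiplication by $(P^T)^{-1}=E+E_{n+1,v}$, which is the induced isomorphism $\ker\De(G)\to\ker(P\De(G)P^T)$. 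The obstruction you describe for the congruence route is illusory: because the $(v,v)$-entry receives a doubled (hence zero) contribution over $\F_2$, the perturbation $\De(\mu_v(G))-\De(G)$ can be written as $\bm{e}_v\bm{u}^T+\bm{u}\bm{e}_v^T$ with $\bm{u}=(1,\dots,1,0)^T$ rather than with $\bm{u}=\bm{1}-\bm{e}_v$, and since this $\bm{u}$ is exactly the last column of $\De(G)$, the equation $\De(G)\bm{x}=\bm{u}$ is solved by $\bm{x}=\bm{e}_{n+1}$ --- which is the paper's choice. What your presentation buys is that part (2) becomes completely transparent ($\phi$ does not touch coordinate $n$), whereas in the congruence formulation one must additionally observe that the only column operation performed adds column $n+1$, which is one of the admissible columns in condition (L), to column $v$; the paper leaves this small point to the reader.
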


\begin{proof}
By definition of switching,  we have
\begin{align*}
(E+E_{v,n+1})\De(G)(E+E_{n+1,v})&=\begin{pmatrix} 
 & & &1 \\
 &M(\mu_{v}(G)) & &\vdots \\
 & & &1 \\
1 &\cdots &1 &0
\end{pmatrix}=\De(G'), 
\end{align*}
where $E$ is the identity matrix and $E_{i,j}$ is the matrix such that the $(i,j)$-entry is $1$ and the other entries are all $0$. This yields the assertions of the lemma.
\end{proof}

\begin{lem} \label{lem.mrsw}
Let $G$ be a graph with $V(G)=\{1,\dots,n\}$ with $n\geq 2$.
Assume that $1$ is an isolated vertex of $G$.
Let $v, w\in V(G_{\e})$ be distinct vertices with $v\neq 1, w \neq n$.
If $G'=\mu_{v \leftarrow w}(G)$, then 
\begin{enumerate}
\item $\mnull_{\F_2} (\De(G)) = \mnull_{\F_2} (\De(G'))$, and
\item $\De(G)$ satisfies the condition (L) if and only if so does $\De(G')$.
\end{enumerate}
\end{lem}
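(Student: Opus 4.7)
The plan is to mimic the proof of \lemref{lem.msw} by exhibiting an explicit invertible matrix $P \in M_{n+1}(\F_2)$ for which $\De(G') = P \De(G) P^T$. The first natural guess, $P = E + E_{v,w}$, adds row $w$ to row $v$ and column $w$ to column $v$; in the $M(G)$-block this correctly replaces the adjacency of $v$ by $M(G)_{v,\cdot} + M(G)_{w,\cdot}$, matching relative switching, but it corrupts the $(v, n+1)$-entry (changing $1$ to $1 + \De(G)_{w,n+1} = 0$), so an additional correction is needed.

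The remedy is to take $P := E + E_{v, w} + E_{v, 1}$. Because $1$ is an isolated vertex, row $1$ of $\De(G)$ vanishes in its first $n$ entries, so the extra term $E_{v, 1}$ does nothing to the $M(G)$-block; but since $\De(G)_{1, n+1} = 1$, it cancels the unwanted change at position $(v, n+1)$. A direct entrywise check (together with its symmetric column counterpart) confirms $P \De(G) P^T = \De(G')$, and one observes $P^2 = E$ over $\F_2$, so $P$ is its own inverse. Part (1) then follows at once from the congruence.

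For part (2), I would use the characterization: $\De(G)$ satisfies condition (L) if and only if $\Ker \De(G)$ contains a vector whose $n$-th coordinate equals $1$. The congruence induces the isomorphism $\bm{x} \mapsto P^T \bm{x}$ from $\Ker \De(G)$ onto $\Ker \De(G')$. The decisive observation is that the $n$-th column of $P$ coincides with the standard basis vector $\bm{e}_n$: the only places where $P$ could deviate from the identity in column $n$ are $(v, w)$ and $(v, 1)$, and these require $w = n$ or $n = 1$, both excluded by the hypotheses $w \neq n$ and $n \geq 2$. Therefore $(P^T \bm{x})_n = x_n$, the bijection preserves the $n$-th coordinate, and condition (L) is preserved.

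The main obstacle is recognizing the correct form of $P$: the naive analogue of \lemref{lem.msw} fails because it disturbs the last row and column of $\De(G)$, and the isolated-vertex hypothesis is precisely what allows the correction term $E_{v, 1}$ to repair this damage without altering the intended effect on the $M(G)$-block. The conditions $v \neq 1$, $w \neq n$, and $n \geq 2$ are each used in a localized way in the construction, which makes it natural to organize the verification in the order above. Once $P$ is identified, both parts reduce to routine matrix manipulation.
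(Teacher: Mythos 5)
Your proposal is correct and is essentially the paper's own argument: the paper proves the lemma by exhibiting exactly the congruence $(E+E_{v,w}+E_{v,1})\De(G)(E+E_{w,v}+E_{1,v})=\De(G')$, using the isolated-vertex hypothesis in the same way. You additionally spell out why part (2) follows (the kernel-vector characterization of condition (L) and the fact that the $n$-th column of $P$ is $\bm{e}_n$, which uses $w\neq n$ and $n\geq 2$), a step the paper leaves implicit; this is a welcome elaboration rather than a different route.
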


\begin{proof}
Since $1$ is an isolated vertex of $G$, the first row (respectively, the first column) of $\De(G)$ is $\left(\begin{smallmatrix} 0 &\cdots &0 &1 \end{smallmatrix}\right)$ (respectively, $\left(\begin{smallmatrix} 0 \\ \vdots \\0 \\1 \end{smallmatrix}\right)$).
By definition of relative switching, we have
\begin{align*}
(E+E_{v,w}+E_{v,1})\De(G)(E+E_{w,v}+E_{1,v})&=\begin{pmatrix} 
 & & &1 \\
 &M(\mu_{v \leftarrow w}(G)) & &\vdots \\
 & & &1 \\
1 &\cdots &1 &0
\end{pmatrix}
= \De(G'),
\end{align*}
where $E$ is the identity matrix and $E_{i,j}$ is the matrix such that the $(i,j)$-entry is $1$ and the other entries are all $0$. This yields the assertions of the lemma.
\end{proof}

Switching and relative switching will be used to compute the stable categories of graded maximal Cohen-Macaulay modules over skew graded $(A_\infty)$ hypersurface singularities in the following sections.

\section{Graphical Methods for Computing Stable Categories}\label{sec.com}

Throughout this section, let $A_\e= k_{\e}[x_1,\dots,x_n]/ (x_1^2+\dots+x_{n-1}^2)$ be a skew graded $(A_\infty)$ hypersurface singularity with $n \geq 2$ and $\deg x_i=1$, and let $G_{\e}$ be the graph associated to $k_{\e}[x_1,\dots,x_n]$. The purpose of this section is to study $\uCM(A_\e)$ using $G_{\e}$.
Note that $k_{\e}[x_1,\dots,x_n]$ is an $n$-dimensional noetherian Koszul AS-regular domain with Hilbert series $(1-t)^{-n}$, so $A_\e$ is an $(n-1)$-dimensional noetherian Koszul AS-Gorenstein algebra.

\begin{lem} \label{lem.c}
\begin{enumerate}
\item $A_\e^!$ is isomorphic to 
\[k\ang{x_1, \dots, x_n}/(\ep_{ij}x_ix_j+x_jx_i,\,x_1^2-x_\ell^2,\,x_n^2 \mid 1\leq i, j\leq n,\, i\neq j,\,1\leq \ell \leq n-1).\]
\item $w:=x_1^2 \in A_\e^!$ is a central regular element such that $A_{\e}^!/(w) \cong k_{\e}[x_1,\dots,x_n]^!$.
\item $C(A_\e) = A_\e^![w^{-1}]_0$ is isomorphic to 
\[k\ang{t_2, \dots, t_{n}}/(t_it_j+\ep_{1i}\ep_{ij}\ep_{j1}t_jt_i,\,t_\ell^2-1,\,t_n^2 \mid 2\leq i, j\leq n,\,i\neq j,\,2\leq \ell \leq n-1).\]
\end{enumerate}
\end{lem}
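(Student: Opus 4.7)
The plan is to verify the three parts of the lemma in order. Parts (1) and (2) are direct applications of the standard Koszul-duality recipe for quadratic algebras, while (3) is a localization computation whose only real difficulty is careful sign bookkeeping.

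\textbf{Part (1).} Let $V = \bigoplus_{i=1}^n kx_i$. The relation space $R \subset V \otimes V$ of $A_\e$ is $(\binom{n}{2}+1)$-dimensional, spanned by the commutators $x_i \otimes x_j - \ep_{ij}x_j \otimes x_i$ for $i<j$ together with the single quadric $\sum_{\ell=1}^{n-1} x_\ell \otimes x_\ell$. Identifying $V^*$ with $V$ via the given basis and writing a general element of $V^* \otimes V^*$ as $\sum a_{k\ell}\, x_k \otimes x_\ell$, the commutator relations force $a_{ij} = \ep_{ij}a_{ji}$ for $i \ne j$, which translates to the anti-commutation relations $\ep_{ij}x_ix_j + x_jx_i = 0$ in $A_\e^!$, while the quadric forces $\sum_{\ell=1}^{n-1} a_{\ell\ell} = 0$, which translates to the diagonal relations $x_\ell^2 - x_1^2 = 0$ for $2 \le \ell \le n-1$ together with $x_n^2 = 0$. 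A dimension count confirms this fills out $R^\perp$.

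\textbf{Part (2).} Centrality of $w := x_1^2$ is a two-step anti-commutation: $x_1^2 x_i = -\ep_{1i}x_1x_ix_1 = \ep_{1i}^2 x_ix_1^2 = x_ix_1^2$. Setting $w = 0$ in the presentation from (1) forces $x_\ell^2 = 0$ for $\ell \le n-1$ via the diagonal relations, and what remains is exactly the presentation of $S^! := k_\e[x_1,\dots,x_n]^!$. For regularity I would compare Hilbert series: since $A_\e$ is Koszul with $H_{A_\e}(t) = (1+t)(1-t)^{-(n-1)}$, Koszul duality gives $H_{A_\e^!}(t) = (1+t)^{n-1}/(1-t)$, and $H_{S^!}(t) = (1+t)^n$. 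The identity $(1-t^2)H_{A_\e^!}(t) = H_{S^!}(t)$, combined with the complex $0 \to A_\e^!(-2) \xrightarrow{\cdot w} A_\e^! \to S^! \to 0$, forces the kernel of multiplication by $w$ to vanish.

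\textbf{Part (3).} I introduce generators $t_i := \eta_i x_ix_1 w^{-1} \in A_\e^![w^{-1}]_0$ for $i = 2, \dots, n$, with scalars $\eta_i \in k^\times$ chosen so that $\eta_i^2 = -\ep_{1i}$ for $2 \le i \le n-1$ (possible since $k$ is algebraically closed of characteristic $\ne 2$) and $\eta_n = 1$. Migrating the inner $x_1$ past $x_j$ via $x_1x_j = -\ep_{1j}x_jx_1$ gives $t_it_j = -\eta_i\eta_j\ep_{1j}\, x_ix_j w^{-1}$ and by symmetry $t_jt_i = -\eta_i\eta_j\ep_{1i}\, x_jx_i w^{-1}$; combining these with $x_ix_j = -\ep_{ij}x_jx_i$ produces the Clifford-type relation $t_it_j + \ep_{1i}\ep_{ij}\ep_{j1}t_jt_i = 0$. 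The choice of $\eta_\ell$ makes $t_\ell^2 = -\ep_{1\ell}\eta_\ell^2 = 1$ for $\ell \le n-1$, and $x_n^2 = 0$ forces $t_n^2 = 0$. These identities yield an algebra homomorphism $\Ga \to C(A_\e)$ from the presented algebra. Surjectivity follows because every quadratic $x_ix_jw^{-1}$ lies in the image (the cases $i=1$ or $j=1$ reduce to $t_j/\eta_j$ up to sign, or to $1 = x_1^2 w^{-1}$) and such quadratics generate $A_\e^![w^{-1}]_0$ as a $k$-algebra. A PBW-style normal form reduces $\Ga$ to the $k$-span of the $2^{n-1}$ monomials $t_2^{b_2}\cdots t_n^{b_n}$ with $b_i \in \{0,1\}$, matching $\dim_k C(A_\e) = 2^{n-1}$ from \cite[Lemma 5.1(3)]{SV}, so the surjection is an isomorphism. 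The real hurdle here is the sign calibration: the $\eta_i$'s must be tuned so that the migration-of-$x_1$'s signs align with both the Clifford coefficients and the $t_\ell^2 = 1$ normalization simultaneously, and this is the step most prone to an off-by-a-sign error.
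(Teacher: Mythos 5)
Your proposal is correct and follows essentially the same route as the paper: parts (1) and (2) are the direct quadratic-dual and Hilbert-series calculations the paper leaves implicit, and for part (3) you, like the paper, realize $C(A_\e)$ via the degree-zero generators $x_ix_1w^{-1}$ (the paper uses $x_1x_iw^{-1}$), derive the Clifford-type relations, and conclude by comparing the surjection from the presented algebra with the dimension count $\dim_k C(A_\e)=2^{n-1}$. The only cosmetic difference is that you build the normalizing scalars $\eta_\ell$ with $\eta_\ell^2=-\ep_{1\ell}$ into the generators from the outset, whereas the paper first obtains the relations $t_\ell^2+\ep_{1\ell}$ and then rescales $t_\ell\mapsto\sqrt{-\ep_{1\ell}}\,t_\ell$ at the end.
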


\begin{proof}
(1) and (2) follow from direct calculation.

(3) Write $t_i := x_1x_{i}w^{-1} \in C(A_\e)$ for $2\leq i\leq n$.
Then it is easy to see that $\{t_2, \dots, t_n\}$ is a set of generators of $C(A_\e)$. Since we have
\begin{align*}
t_it_j &=  (x_1x_iw^{-1})(x_1x_jw^{-1}) =
-\ep_{1i}x_1^2x_ix_jw^{-2}
= -\ep_{1i}x_ix_jw^{-1} = \ep_{1i}\ep_{ji}x_jx_iw^{-1} \\
&= -\ep_{1i}\ep_{ij}\ep_{j1}(-\ep_{1j}x_1^2x_jx_iw^{-2}) = -\ep_{1i}\ep_{ij}\ep_{j1}(x_1x_jw^{-1})(x_1x_iw^{-1}) =  -\ep_{1i}\ep_{ij}\ep_{j1}t_jt_i,
\end{align*}
for $2\leq i, j\leq n, i\neq j$, 
\begin{align*}
t_\ell^2 &= (x_1x_\ell w^{-1})(x_1x_\ell w^{-1}) = -\ep_{1\ell}x_\ell^2w^{-1} = -\ep_{1\ell}x_1^2w^{-1} = -\ep_{1\ell}
\end{align*}
for $2\leq \ell\leq n-1$, and 
\begin{align*}
t_n^2 &= (x_1x_n w^{-1})(x_1x_n w^{-1}) = -\ep_{1n}x_n^2w^{-1} = 0,
\end{align*}
there exists a surjection
$k\ang{t_2, \dots, t_{n}}/(t_it_j+\e_{1i}\e_{ij}\e_{j1}t_jt_i,\,t_\ell^2+\e_{1\ell},\,t_n^2) \to C(A_\e)$.
This is an isomorphism because the algebras have the same dimension.
Since $\ep_{1\ell} \neq 0$, the assignment $t_\ell \mapsto \sqrt{-\ep_{1\ell}}t_\ell$ for $2\leq \ell\leq n-1$ and $t_n \mapsto t_n$ induces the isomorphism
\begin{align*}
&k\ang{t_2, \dots, t_{n}}/(t_it_j+\e_{1i}\e_{ij}\e_{j1}t_jt_i,\,t_\ell^2+\e_{1\ell},\,t_n^2)\\
&\xrightarrow{\sim} k\ang{t_2, \dots, t_{n}}/(t_it_j+\e_{1i}\e_{ij}\e_{j1}t_jt_i,\,t_\ell^2-1,\,t_n^2).
\end{align*}
Hence we have $C(A_\e) \cong k\ang{t_2, \dots, t_{n}}/(t_it_j+\e_{1i}\e_{ij}\e_{j1}t_jt_i,\,t_\ell^2-1,\,t_n^2)$.
\end{proof}

\begin{ex} \label{ex.C(A)}
\begin{enumerate}
\item If $A_\e= k[x_1,x_2]/ (x_1^2)$ or $A_\e= k\ang{x_1,x_2}/ (x_1x_2+x_2x_1,\,x_1^2)$, then $C(A_\e) \cong k[t]/(t^2)$ by Lemma \ref{lem.c} (3), so $C(A_\e)$ is isomorphic to the algebra $\La$ given by the quiver with relations \eqref{quiv1}.
\item If $A_\e= k[x_1,x_2, x_3]/ (x_1^2+x_2^2)$, then $C(A_\e) \cong k\ang{s,t}/(st+ts,\,s^2-1,\,t^2)$ by Lemma \ref{lem.c} (3). Let
\begin{align*}
e_1 = \frac{1}{2}(1+s+t+st),\quad
e_2 = \frac{1}{2}(1-s-t-st),\quad
a= \frac{1}{2}(t+st),\quad
b = \frac{1}{2}(t-st).
\end{align*}
Then $\{e_1,e_2,a,b\}$ is a $k$-basis of $k\ang{s,t}/(st+ts,\,s^2-1,\,t^2)$.
Since $e_1,e_2$ are orthogonal idempotents with $e_1+e_2=1$, and
\[ e_1ae_2=a,\quad e_2be_1=b,\quad ab=0,\quad ba=0,\]
it follows that $C(A_\e) \cong k\ang{s,t}/(st+ts,\,s^2-1,\,t^2)\cong \Ga$, where $\Ga$ is the algebra given by the quiver with relations \eqref{quiv2}.
\item If $A_\e= k\ang{x_1,x_2, x_3}/ (x_1x_2+x_2x_1,\,x_1x_3+x_3x_1,\, x_2x_3+x_3x_2,\,x_1^2+x_2^2)$, then $C(A_\e) \cong k[s,t]/(s^2-1,\,t^2)$ by Lemma \ref{lem.c} (3). Since $k[s,t]/(s^2-1,\,t^2) \cong k[s]/(s^2-1)\otimes k[t]/(t^2) \cong k^2\otimes k[t]/(t^2) \cong k[t]/(t^2)\times k[t]/(t^2)$,  we have $C(A_\e)\cong \La \times \La$, where
$\La$ is the algebra given by the quiver with relations \eqref{quiv1}.
\end{enumerate}
\end{ex}

We first give two theorems concerning switching and relative switching, which are analogs of \cite[Lemmas 6.5 and 6.7]{MUk}.

\begin{thm}[Switching theorem] \label{thm.S1}
Let $A_\e, A_{\e'}$ be skew graded $(A_\infty)$ hypersurface singularities.
If $G_{\e'}=\mu_{v}(G_{\e})$ for some $v \in  V(G_{\e})$, then $C(A_\e) \cong C(A_{\e'})$ and $\uCM(A_\e) \cong \uCM(A_{\e'})$.
\end{thm}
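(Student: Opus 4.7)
The plan is to reduce the equivalence of stable categories to an isomorphism of the Smith--Van den Bergh algebras $C(A_\e)$ and $C(A_{\e'})$, and then to verify the latter directly from the explicit presentation in Lemma~\ref{lem.c}(3). Indeed, by Theorem~\ref{thm.SV} we have
\[
\uCM(A_\e) \simeq \Db(\mod C(A_\e)) \quad\text{and}\quad \uCM(A_{\e'}) \simeq \Db(\mod C(A_{\e'})),
\]
so any isomorphism $C(A_\e) \cong C(A_{\e'})$ of $k$-algebras yields the claimed triangle equivalence. Note also that $x_1^2+\dots+x_{n-1}^2$ remains central in $k_{\e'}[x_1,\dots,x_n]$ (each $x_\ell^2$ is central in any $(\pm1)$-skew polynomial algebra), so $A_{\e'}$ is genuinely a skew graded $(A_\infty)$ hypersurface singularity.

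Using Lemma~\ref{lem.c}(3), both $C(A_\e)$ and $C(A_{\e'})$ are presented on the same generators $t_2,\dots,t_n$ with the same quadratic relations $t_\ell^2-1$ for $2 \le \ell \le n-1$ and $t_n^2$, which are independent of $\e$. Therefore the whole proof reduces to showing the sign identity
\[
\ep_{1i}\ep_{ij}\ep_{j1} \;=\; \ep'_{1i}\ep'_{ij}\ep'_{j1} \qquad (2 \le i,j \le n,\ i \ne j),
\]
where $\ep'$ is obtained from $\ep$ by switching at $v$; that is, $\ep'_{vu} = -\ep_{vu}$ for $u \ne v$ and $\ep'_{ab}=\ep_{ab}$ otherwise. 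I would then run the case analysis on the position of $v$ relative to $\{1,i,j\}$: if $v=1$, the factors $\ep_{1i}$ and $\ep_{j1}$ each flip and $\ep_{ij}$ is unchanged; if $v \in \{i,j\}$, exactly the two factors involving $v$ flip; and if $v \notin \{1,i,j\}$, none of the three factors changes. In every case an even number of sign changes occurs, so the product is preserved.

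Once this identity is established, the identity map on the generators $t_2,\dots,t_n$ descends to a $k$-algebra isomorphism $C(A_\e) \xrightarrow{\sim} C(A_{\e'})$, and composing with the Smith--Van den Bergh equivalences proves $\uCM(A_\e) \simeq \uCM(A_{\e'})$. The main point to get right is the sign bookkeeping in the case analysis; there is no genuine obstacle beyond this, and in particular no hypothesis on isolated vertices is needed (in contrast with the relative switching version of Lemma~\ref{lem.mrsw}), because the generator $t_1$ is absent from the presentation of $C(A_\e)$ and vertex~$n$ enters symmetrically to the others in the commutation relations.
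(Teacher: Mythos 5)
Your proposal is correct and follows essentially the same route as the paper: reduce to an isomorphism $C(A_\e)\cong C(A_{\e'})$ via the presentation in Lemma~\ref{lem.c}(3), observe that switching at $v$ preserves the triple products $\e_{1i}\e_{ij}\e_{j1}$ (the paper states this for all triples $\e_{ij}\e_{jh}\e_{hi}$ without writing out the case analysis you supply), and conclude with Theorem~\ref{thm.SV}.
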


\begin{proof}
Since $G_{\e'}=\mu_{v}(G_{\e})$, we see that $\e'_{ij}\e'_{jh}\e'_{hi}=\e_{ij}\e_{jh}\e_{hi}$ for every $1\leq i<j<h\leq n$, so $C(A_\e) \cong C(A_{\e'})$ by Lemma \ref{lem.c} (3), and $\uCM(A_\e) \cong \uCM(A_{\e'})$ by Theorem \ref{thm.SV}.
\end{proof}

\begin{thm}[Relative switching theorem] \label{thm.S2}
Let $A_\e, A_{\e'}$ be skew graded $(A_\infty)$ hypersurface singularities.
Assume that $1$ is an isolated vertex of $G_{\e}$.
Let $v, w\in V(G_{\e})$ be distinct vertices with $v\neq 1, w \neq n$.
If $G_{\e'}=\mu_{v \leftarrow w}(G_\e)$, then $C(A_{\e})\cong C(A_{\e'})$ and $\uCM(A_{\e})\cong \uCM(A_{\e'})$.
\end{thm}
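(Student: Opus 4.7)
The plan is to construct an explicit algebra isomorphism $\phi \colon C(A_{\e'}) \xrightarrow{\sim} C(A_\e)$ and then invoke Theorem~\ref{thm.SV} to transfer it to a triangle equivalence $\uCM(A_\e) \cong \uCM(A_{\e'})$. The first simplification comes from the hypothesis that $1$ is an isolated vertex of $G_\e$: it forces $\e_{1i}=-1$ for every $i\geq 2$, so the structure constants $c_{ij}:=\e_{1i}\e_{ij}\e_{j1}$ from Lemma~\ref{lem.c}(3) collapse to $c_{ij}=\e_{ij}$ for $2\leq i\neq j\leq n$. Since $v,w\neq 1$, relative switching at $v$ with respect to $w$ leaves every edge incident to $1$ untouched, so $1$ remains isolated in $G_{\e'}$ and the same simplification $c'_{ij}=\e'_{ij}$ applies to $C(A_{\e'})$.

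Next I would record how the skew-commutation signs transform: a direct check from the definition of relative switching yields
\[
\e'_{vu}=-\e_{vu}\e_{wu}\quad (u\neq v,w),\qquad \e'_{vw}=\e_{vw},\qquad \e'_{ij}=\e_{ij}\quad (i,j\neq v).
\]
Guided by these formulas, I would try the assignment $\phi(t_i)=t_i$ for $i\neq v$ and $\phi(t_v)=\alpha\,t_v t_w$, where $\alpha\in k$ is chosen so that $\alpha^2=-\e_{vw}$ when $v\neq n$ (possible since $k$ is algebraically closed of characteristic $\neq 2$), and $\alpha=1$ when $v=n$.

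The bulk of the work is then to verify that $\phi$ respects the defining relations of $C(A_{\e'})$. For $i\notin\{v,w\}$, the skew commutations $t_it_v=-\e_{iv}t_vt_i$ and $t_it_w=-\e_{iw}t_wt_i$ in $C(A_\e)$ yield $\phi(t_i)\phi(t_v)=\alpha\,\e_{iv}\e_{iw}\,t_vt_wt_i=-\e'_{iv}\,\phi(t_v)\phi(t_i)$, as desired. For $i=w$, using $t_w^2=1$ (here the hypothesis $w\neq n$ enters crucially) one gets $\phi(t_w)\phi(t_v)=\alpha t_wt_vt_w=-\alpha\e_{wv}t_v$ and $\phi(t_v)\phi(t_w)=\alpha t_v$, matching the relation with $\e'_{wv}=\e_{wv}$. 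For the squares, $\phi(t_v)^2=-\alpha^2\e_{vw}\,t_v^2 t_w^2$ equals $1$ when $v\neq n$ by the choice of $\alpha$, and equals $0$ automatically when $v=n$. The remaining relations are preserved trivially.

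Finally, surjectivity follows from $\phi(t_v)\phi(t_w)=\alpha t_v t_w^2=\alpha t_v$---again using $w\neq n$ so that $t_w^2=1$---after which $\phi$ is an isomorphism because $\dim_k C(A_\e)=\dim_k C(A_{\e'})=2^{n-1}$ by Lemma~\ref{lem.c}. I expect the main obstacle to be the careful sign bookkeeping in the mixed relations, especially the case $i=w$, where the cancellation leans simultaneously on $\e'_{wv}=\e_{wv}$ being unchanged by the relative switching and on $t_w^2=1$ in $C(A_\e)$; this is also exactly where one sees why the assumptions $w\neq n$ (to avoid $t_w^2=0$) and ``$1$ isolated'' (to eliminate the twist by $\e_{1i}\e_{j1}$) are both genuinely needed.
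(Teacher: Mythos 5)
Your proposal is correct and follows essentially the same route as the paper: both proofs reduce to the presentation of $C(A_\e)$ from Lemma~\ref{lem.c}(3) (using that $1$ is isolated so $\e_{1i}=-1$), send $t_i\mapsto t_i$ for $i\neq v$ and $t_v$ to a multiple of $t_vt_w$, check the relations, and conclude via Theorem~\ref{thm.SV}. The only cosmetic difference is that the paper routes through an intermediate presented algebra $D$ with $s_v^2=-\e_{vw}$ and rescales afterwards, whereas you absorb the normalization into the scalar $\alpha$ from the start.
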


\begin{proof}
Since $\e_{1i}=-1$ for all $1\leq i \leq n$,
\[C(A_{\e}) \cong k\ang{t_2, \dots, t_{n}}/(t_it_j+\ep_{ij}t_jt_i,\,t_\ell^2-1,\,t_n^2 \mid 2\leq i, j\leq n,\,i\neq j,\,2\leq \ell \leq n-1)\]
by Lemma \ref{lem.c} (3). 
Let $D$ be the algebra generated by
$s_2,\dots, s_n$ with defining relations
\begin{align*}
&s_is_j+\e_{ij}s_js_i \quad (2\leq i, j\leq n,\,i\neq j,\,i\neq v,\,j \neq v),\\
&s_vs_j-\e_{vj}\e_{wj}s_js_v \quad (2\leq j\leq n,\,j\neq v,\,j\neq w),\\
&s_vs_w+\e_{vw}s_ws_v,\\
&s_\ell^2-1 \quad (2\leq \ell \leq n-1,\,\ell\neq v), \\
&s_v^2+\e_{vw} \quad (\text{when}\ v\neq n),\\
&s_n^2.
\end{align*}
Define a map
$\phi :k\ang{s_2, \dots, s_{n}} \to  k\ang{t_2, \dots, t_{n}}/(t_it_j+\ep_{ij}t_jt_i,\,t_\ell^2-1,\,t_n^2)$ by 
$s_i \mapsto t_i \;\; (2\leq i \leq n, i \neq v)$ and $s_{v} \mapsto  t_{v}t_{w}$.
Then one can verify that $\phi$ sends all defining relations of $D$ to zero, so 
we get an induced map $\overline{\phi}: D \to  k\ang{t_2, \dots, t_{n}}/(t_it_j+\ep_{ij}t_jt_i,\,t_\ell^2-1,\,t_n^2)$. It is easily seen that $\overline{\phi}$ is an isomorphism.

Moreover, since $G_{\e'}=\mu_{v \leftarrow w}(G_\e)$, it follows that 
\begin{align*}
&\e'_{ij}=\e_{ij} \quad (1\leq i, j\leq n,\,i\neq j,\,i\neq v,\,j \neq v),\\
&\e'_{vj}=-\e_{vj}\e_{wj} \quad (1\leq j \leq n,\,j\neq v,\,j\neq w),\\
&\e'_{vw}=\e_{vw}.
\end{align*}
and $1$ is an isolated vertex of $G_{\e'}$, so we see that $D$ is isomorphic to $C(A_{\e'})$ by Lemma \ref{lem.c} (3). Therefore, we obtain $C(A_{\e})\cong C(A_{\e'})$. The last equivalence follows from Theorem \ref{thm.SV}.
\end{proof}

We then give two ways to reduce the number of variables in the computation of $\CM(A_\e)$, which are analogs of \cite[Lemmas 6.17 and 6.18]{MUk}.
We will see that the first one is coming from the noncommutative Kn\"orrer's periodicity theorem \cite[Theorem 3.9]{MUk} and the second one is coming from \cite[Theorem 4.12 (3)]{MUk}.

\begin{thm}[{Special case of noncommutative graded Kn\"orrer's periodicity theorem}] \label{thm.Kn}
Let $S$ be a noetherian AS-regular algebra and $f$ a homogeneous regular central element of positive even degree $2m$. Then there exists an equivalence
\[\uCM(S/(f)) \cong \uCM(S[y,z]/(f+y^2+z^2)),\]
where $\deg y=\deg z=m$.
\end{thm}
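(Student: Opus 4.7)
The plan is to reduce the statement to the version of noncommutative graded Knörrer's periodicity available as \cite[Theorem 3.9]{MUk}, which handles quadratic extensions of the form $S[u,v]/(f+uv)$ with $u,v$ central commuting variables of appropriate degree. The key observation is that over our algebraically closed field $k$ of characteristic different from $2$, the quadratic form $y^2 + z^2$ is equivalent to the hyperbolic form $uv$, and this diagonalization persists in the central polynomial extension.

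First I would fix $i \in k$ with $i^2 = -1$ and, inside the central polynomial extension $S[y,z]$ with $\deg y = \deg z = m$, introduce
\[
u := y + iz, \qquad v := y - iz.
\]
Since $y$ and $z$ are central variables commuting with each other and with all of $S$, the same holds for $u$ and $v$. Moreover $\{u,v\}$ is a $k$-basis of $ky + kz$, so there is a graded algebra isomorphism $S[y,z] \xrightarrow{\sim} S[u,v]$ sending $y \mapsto \tfrac{1}{2}(u+v)$ and $z \mapsto \tfrac{1}{2i}(u-v)$. A direct computation gives
\[
uv = (y+iz)(y-iz) = y^2 - i^2 z^2 = y^2 + z^2,
\]
so $f + y^2 + z^2$ corresponds to $f + uv$ under this isomorphism, and consequently
\[
S[y,z]/(f + y^2 + z^2) \;\cong\; S[u,v]/(f + uv)
\]
as graded algebras.

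Next I would verify that the hypotheses of \cite[Theorem 3.9]{MUk} are satisfied for the right-hand side. The algebra $S[u,v]$ is noetherian AS-regular (it is the tensor product of $S$ with the commutative polynomial ring $k[u,v]$, with $u,v$ central of degree $m$), and $f + uv$ is a homogeneous regular central element of degree $2m$ since $f$ is regular central in $S$ and $u,v$ are central indeterminates independent from $S$. Applying \cite[Theorem 3.9]{MUk} then yields the triangulated equivalence
\[
\uCM(S/(f)) \;\cong\; \uCM\bigl(S[u,v]/(f+uv)\bigr),
\]
and combining with the algebra isomorphism above gives the desired equivalence
\[
\uCM(S/(f)) \;\cong\; \uCM\bigl(S[y,z]/(f+y^2+z^2)\bigr).
\]

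There is no real obstacle here beyond bookkeeping: the whole content of the argument is the $\mathrm{char}\,k \neq 2$ diagonalization of a binary quadratic form, which allows the substitution $y^2 + z^2 \rightsquigarrow uv$ required to invoke the already-established noncommutative Knörrer periodicity. The only point that deserves a moment's care is that $u$ and $v$ really are central in $S[y,z]$ (so that the substitution produces an honest graded algebra isomorphism rather than merely a vector space isomorphism), which follows immediately from the centrality of $y$ and $z$.
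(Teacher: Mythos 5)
Your proof is correct and takes essentially the same route as the paper, whose entire proof of this statement is the one-line citation ``This is a special case of \cite[Theorem 3.9]{MUk}.'' The only thing you add is the explicit hyperbolic change of variables $u=y+iz$, $v=y-iz$ (valid since $k$ is algebraically closed of characteristic $\neq 2$, and correctly justified as a graded algebra isomorphism because $y,z$ are central), which is harmless bookkeeping that is needed precisely when the cited theorem is stated in the $f+uv$ normal form rather than directly for $f+y^2+z^2$.
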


\begin{proof}
This is a special case of \cite[Theorem 3.9]{MUk}.
\end{proof}

\begin{thm} [Kn\"orrer reduction] \label{thm.R1} 
Suppose that $vw$ is an isolated edge of $G_{\e}$, where $v\neq n, w \neq n$, and  $V(G_{\e})\setminus\{v,w,n\} \neq \emptyset$.
If $G_{\e'}=G_{\e}\setminus \{v, w\}$, 
then 
$\uCM(A_{\e})\cong \uCM(A_{\e'})$. 
\end{thm}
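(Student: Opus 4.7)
The plan is to realize the Knörrer reduction $G_\e \rightsquigarrow G_\e \setminus \{v,w\}$ as a direct instance of the noncommutative graded Knörrer periodicity theorem (Theorem \ref{thm.Kn}, or rather its more general version \cite[Theorem 3.9]{MUk}). The isolated-edge hypothesis will supply exactly the commutation pattern that recognizes the two discarded generators $x_v$ and $x_w$ as a Knörrer extension variable pair.

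First I would set up the relevant ambient algebras. Let $S = k_\e[x_1,\dots,x_n]$ and let $S' = k_{\e'}[x_i : i \in \{1,\dots,n\}\setminus\{v,w\}]$ be the $(\pm 1)$-skew polynomial algebra whose parameter $\e'$ is the symmetric submatrix of $\e$ obtained by deleting the rows and columns indexed by $v$ and $w$. Write $f = x_1^2+\cdots+x_{n-1}^2 \in S$ and $f' = \sum_{i \in \{1,\dots,n-1\}\setminus\{v,w\}} x_i^2 \in S'$, so that $A_\e = S/(f)$ and $A_{\e'} = S'/(f')$. The hypothesis $V(G_\e) \setminus \{v,w,n\} \neq \emptyset$ guarantees that $S'$ has at least two variables (one of which is $x_n$), ensuring that $A_{\e'}$ is again a bona fide skew graded $(A_\infty)$ hypersurface singularity.

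The crucial algebraic step is to recognize $S$ as a two-variable skew extension of $S'$. Because $vw$ is an isolated edge of $G_\e$ and $v,w \neq n$, one has $\e_{vw}=1$ together with $\e_{vi} = \e_{wi} = -1$ for every $i \in \{1,\dots,n\}\setminus\{v,w\}$. Translating these back into relations of $S$, the generators $x_v$ and $x_w$ commute with each other but anticommute with every generator of $S'$. Consequently $S$ is canonically isomorphic to the skew polynomial extension $S'[y,z]$ on two new degree-$1$ generators $y,z$ subject to exactly these relations, and under this isomorphism $f$ corresponds to $f' + y^2 + z^2$.

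Finally I would apply Theorem \ref{thm.Kn} to the AS-regular algebra $S'$ and the regular central element $f'$ of degree $2$, using $y,z$ of degree $1$ that anticommute with $S'$ and commute with one another. The theorem yields a triangulated equivalence
\[
\uCM(S'/(f')) \;\cong\; \uCM\bigl(S'[y,z]/(f'+y^2+z^2)\bigr),
\]
which, via the identification $y=x_v,\ z=x_w$, becomes the desired equivalence $\uCM(A_{\e'}) \cong \uCM(A_\e)$. The main obstacle is to confirm that the general form \cite[Theorem 3.9]{MUk} of noncommutative graded Knörrer periodicity genuinely covers this specific type of skew two-variable extension -- namely one in which the two new generators anticommute with the base algebra while commuting with each other, contributing a central summand $y^2+z^2$ to the hypersurface element. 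Once this is in hand, the argument runs parallel to the corresponding Knörrer reduction in the $(A_1)$ setting \cite[Lemma 6.17]{MUk}, and the rest of the proof is routine.
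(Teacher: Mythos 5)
There is a genuine gap at exactly the point you flag as ``the main obstacle,'' and it is not a formality. Your identification $S \cong S'[y,z]$ with $y=x_v$, $z=x_w$ produces two variables that commute with each other but \emph{anticommute} with every generator of $S'$. Theorem \ref{thm.Kn} as stated (and as the special case of \cite[Theorem 3.9]{MUk} actually invoked in this paper) concerns the genuinely central extension $S[y,z]$, and the other extension appearing in the paper, $S^{\dagger\dagger}=S[y;\xi][z;\xi]$, has $y,z$ anticommuting with each other as well. Your mixed configuration is a third, distinct algebra: one cannot convert ``$y,z$ commute with each other but anticommute with $S'$'' into either covered case by a linear change of the variables $y,z$ alone (the conditions $ac=bd=ad+bc=0$ forced by anticommutativity are incompatible with preserving $y^2+z^2$). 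So as written, the theorem you cite does not apply to the extension you have constructed, and the proof does not close.

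The paper closes this gap with the Switching Theorem rather than by generalizing Kn\"orrer periodicity: set $G_{\e''}=\mu_{w}\mu_{v}(G_{\e})$. Since $vw$ is an isolated edge, switching at $v$ and then at $w$ makes both vertices adjacent to everything, i.e.\ $x_v,x_w$ become \emph{central} in $k_{\e''}[x_1,\dots,x_n]$, while $\uCM(A_{\e})\cong\uCM(A_{\e''})$ by Theorem \ref{thm.S1} and the induced subgraph on the remaining vertices is unchanged, so $G_{\e''}\setminus\{v,w\}=G_{\e}\setminus\{v,w\}=G_{\e'}$. Now $k_{\e''}[x_1,\dots,x_n]$ really is the central polynomial extension $S'[x_v,x_w]$ with $f=f'+x_v^2+x_w^2$, and Theorem \ref{thm.Kn} applies verbatim to give $\uCM(A_{\e''})\cong\uCM(A_{\e'})$. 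Your setup of $S'$, $f'$, and the use of the hypothesis $V(G_{\e})\setminus\{v,w,n\}\neq\emptyset$ are all correct; what is missing is this preliminary switching step, which is precisely what reduces your non-central configuration to the case the cited theorem covers.
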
 

\begin{proof} 
Let $G_{\e''}=\mu_{w}\mu_{v}(G_{\e})$. Then $x_{v}, x_{w}$ are central elements in $k_{\e''}[x_1,\dots, x_n]$. Since $G_{\e'}=G_{\e}\setminus \{v, w\}=G_{\e''}\setminus \{v, w\}$, we have $\uCM(A_{\e})\cong \uCM(A_{\e''}) \cong \uCM(A_{\e'})$
by Theorems \ref{thm.S1} and \ref{thm.Kn}.
\end{proof} 

Let $S$ be a connected algebra and $\si$ a graded algebra automorphism of $S$. 
A \emph{graded Ore extension} of $S$ by $\si$ is a graded algebra $S[y; \si]$ which is defined as follows:
$\deg y \geq 1$, $S[y; \si]=S[y]$ as a graded left free $S$-module, and the multiplication of $S[y; \si]$ is given by $ay=y\si(a)$ for $a\in S$. 
Let $\xi$ denote the graded algebra automorphism of $S$ defined by $a\mapsto (-1)^{\deg a}a$.  
For example, $k[x_1, x_2][y; \xi]$is isomorphic to $k\ang{x_1, x_2, y}/(x_1x_2-x_2x_1, x_1y+yx_1, x_2y+yx_2)$.

Let $S$ be a $d$-dimensional noetherian AS-regular algebra with Hilbert series $(1-t)^{-d}$.
For a homogeneous regular central element $f \in S$ of degree $2$ and $A=S/(f)$, we define $S^{\dagger}:=S[y; \xi]$ and $A^{\dagger}:=S^{\dagger}/(f+y^2)$, where $\deg y=1$. 
Since $f+y^2\in S^{\dagger}_2$ is a regular central element, we further define $S^{\dagger\dagger}:=(S^{\dagger})^{\dagger}=(S[y; \xi])[z; \xi]$ and $A^{\dagger\dagger}:=(A^{\dagger})^{\dagger}=S^{\dagger\dagger}/(f+y^2+z^2)$, where $\deg z=1$.

\begin{thm}[{\cite[Theorem 4.12 (3)]{MUk}}] \label{thm.MUdag}
Let $S$ be a $d$-dimensional noetherian AS-regular algebra with Hilbert series $(1-t)^{-d}$,
$f \in S$ a homogeneous regular central element of degree $2$, and $A=S/(f)$. Then $C(A^{\dagger\dagger}) \cong C(A^{\dagger}) \times C(A^{\dagger})$.
\end{thm}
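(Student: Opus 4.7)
The plan is to produce a central involution $\sigma \in C(A^{\dagger\dagger})$ such that $C(A^{\dagger\dagger}) = C(A^\dagger) \oplus \sigma\, C(A^\dagger)$; combined with $\sigma^2 = 1$, this yields
\[
C(A^{\dagger\dagger}) \cong C(A^\dagger) \otimes_k k[\sigma]/(\sigma^2-1) \cong C(A^\dagger)\times C(A^\dagger).
\]

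First, I would carry out a Koszul-dual computation analogous to Lemma \ref{lem.c}(1). By \cite[Lemma~5.1]{SV}, both $A^\dagger$ and $A^{\dagger\dagger}$ are Koszul AS-Gorenstein, and their Koszul duals contain central regular degree-two elements $w' \in (A^\dagger)^!_2$ and $w'' \in (A^{\dagger\dagger})^!_2$ satisfying $(A^\dagger)^!/(w') \cong (S^\dagger)^!$ and $(A^{\dagger\dagger})^!/(w'') \cong (S^{\dagger\dagger})^!$. Because $y$ and $z$ anticommute with every element of $S_1$ and with each other in $S^{\dagger\dagger}$, their Koszul-dual counterparts (still denoted $y, z$) commute with the generators of $S^*$ and with each other in $(S^{\dagger\dagger})^!$. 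Because the defining relation is $f+y^2+z^2$, one furthermore finds $y^2 = z^2 = w''$ in $(A^{\dagger\dagger})^!$, and $y^2 = w'$ in $(A^\dagger)^!$; the natural inclusion $(A^\dagger)^! \hookrightarrow (A^{\dagger\dagger})^!$ identifies $w'$ with $w''$ and induces an inclusion $C(A^\dagger) \hookrightarrow C(A^{\dagger\dagger})$.

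Next, I would define
\[
\sigma := y\,z\,(w'')^{-1} \in C(A^{\dagger\dagger}).
\]
The commutation relations above give that $\sigma$ lies in the center of $C(A^{\dagger\dagger})$, and
\[
\sigma^2 = y^2 z^2 (w'')^{-2} = (w'')^2 (w'')^{-2} = 1.
\]
Since $C(A^{\dagger\dagger})$ is generated by products $\eta_i \eta_j (w'')^{-1}$ with $\eta_i, \eta_j$ ranging over the degree-one generators $\xi_1^*, \dots, \xi_d^*, y, z$ of $(A^{\dagger\dagger})^!$, it suffices to rewrite those generators that involve $z$. For $\xi \in \{\xi_1^*, \dots, \xi_d^*\}$ we have
\[
z^2 (w'')^{-1} = 1, \qquad z\xi\,(w'')^{-1} = \sigma \cdot y\xi\,(w'')^{-1}, \qquad yz\,(w'')^{-1} = \sigma,
\]
where the middle identity uses $y^2 = w''$ together with $yz=zy$. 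Hence $C(A^{\dagger\dagger}) = C(A^\dagger) + \sigma\, C(A^\dagger)$.

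Finally, a dimension count closes the argument. By \cite[Lemma~5.1(3)]{SV}, $\dim_k C(A^{\dagger\dagger}) = 2^{d+1}$ and $\dim_k C(A^\dagger) = 2^d$, so $\sigma \notin C(A^\dagger)$ and the sum $C(A^\dagger) + \sigma\, C(A^\dagger)$ is direct. Since $\sigma$ is central with $\sigma^2 = 1$ and $\fchar k \neq 2$, one has $k[\sigma]/(\sigma^2-1) \cong k\times k$ via $\sigma \mapsto (1,-1)$, yielding
\[
C(A^{\dagger\dagger}) \cong C(A^\dagger) \otimes_k (k\times k) \cong C(A^\dagger)\times C(A^\dagger).
\]
The main obstacle is the first step: establishing the explicit Koszul-dual identities $y^2 = z^2 = w''$ and the commutation properties of $y, z$ with the rest of $(A^{\dagger\dagger})^!$. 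This is a concrete computation along the lines of Lemma \ref{lem.c}(1), but it requires careful bookkeeping of the signs introduced by the $\xi$-twist in the Ore extensions $S^\dagger$ and $S^{\dagger\dagger}$; once it is in place, the remaining algebraic manipulations are essentially formal, mirroring the computation in Example \ref{ex.C(A)}(3).
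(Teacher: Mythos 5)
The paper does not actually prove this statement: Theorem~\ref{thm.MUdag} is imported verbatim from \cite[Theorem 4.12 (3)]{MUk} and used as a black box (its only role here is to feed the two point reduction, Theorem~\ref{thm.R2}). So there is no internal proof to compare against; what you have written is a from-scratch reconstruction, and it is essentially the right one. Your argument is sound: granting the Koszul-dual facts that $y$ and $z$ commute with all degree-one generators of $(A^{\dagger\dagger})^!$ and that $y^2=z^2=w''$, the element $\sigma=yz(w'')^{-1}$ is indeed central with $\sigma^2=y^2z^2(w'')^{-2}=1$, the three rewriting identities are correct (e.g.\ $\sigma\cdot y\xi(w'')^{-1}=yzy\xi(w'')^{-2}=y^2z\xi(w'')^{-2}=z\xi(w'')^{-1}$), and your final dimension count $2^{d+1}=\dim C(A^{\dagger\dagger})\le 2\dim C(A^{\dagger})=2^{d+1}$ simultaneously forces the sum $B+\sigma B$ to be direct and the map $C(A^{\dagger})\to B$ to be injective, so you do not even need to establish the injectivity of $(A^{\dagger})^!\hookrightarrow(A^{\dagger\dagger})^!$ separately — a surjection onto the subalgebra $B$ suffices. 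The decomposition via the central idempotents $\tfrac12(1\pm\sigma)$ (using $\fchar k\neq 2$) is exactly the mechanism the paper exhibits concretely in Example~\ref{ex.C(A)}(3), where $k[s,t]/(s^2-1,t^2)$ splits as $k[t]/(t^2)\times k[t]/(t^2)$; your $\sigma$ plays the role of $s$ there. The one genuine piece of unfinished work is the step you yourself flag: verifying the commutation relations and the identities $y^2=z^2=w'$ in the Koszul dual, with the signs coming from the $\xi$-twist. This is a real computation, but it is of exactly the same nature as Lemma~\ref{lem.c}(1)--(2), which the paper dismisses as ``direct calculation,'' so it is a fair thing to defer. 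With that computation supplied, your proof is complete and matches the argument of \cite{MUk}.
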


\begin{thm}[Two point reduction]\label{thm.R2}
Let $A_\e, A_{\e'}$ be skew graded $(A_\infty)$ hypersurface singularities.
Suppose that $v, w \in V (G_\e)$ are two distinct isolated vertices different from $n$.
If $G_{\e'}=G_{\e}\setminus\{v\}$, then $C(A_\e) \cong C(A_{\e'}) \times C(A_{\e'})$ and $\uCM(A_\e) \cong \uCM(A_{\e'}) \times \uCM(A_{\e'})$.
\end{thm}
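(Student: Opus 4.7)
The plan is to realise $A_\e$ as a double Ore extension $B^{\dagger\dagger}$ of a suitable sub-hypersurface and then invoke \thmref{thm.MUdag}. First I would introduce the subalgebra $S \subset k_\e[x_1,\dots,x_n]$ generated by $\{x_i : i \in \{1,\dots,n\}\setminus\{v,w\}\}$. Because $v$ and $w$ are isolated vertices of $G_\e$, we have $\e_{vi} = -1$ for every $i \neq v$ and $\e_{wi} = -1$ for every $i \neq w$; in particular $\e_{vw} = -1$. Consequently each of $x_v$ and $x_w$ anticommutes with every other generator (and with each other), so $S$ is itself a $(\pm 1)$-skew polynomial algebra in $n-2$ variables, and adjoining $x_w$ then $x_v$ via the sign-flip automorphism $\xi$ identifies
\[
k_\e[x_1,\dots,x_n] \;\cong\; S[x_w;\xi][x_v;\xi] \;=\; S^{\dagger\dagger}.
\]

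Next, set $g := \sum_{i \in \{1,\dots,n-1\}\setminus\{v,w\}} x_i^2 \in S$ and $B := S/(g)$. Since $v, w \in \{1,\dots,n-1\}$, the defining relation of $A_\e$ factors as $x_1^2 + \cdots + x_{n-1}^2 = g + x_w^2 + x_v^2$, giving $A_\e \cong B^{\dagger\dagger}$; removing just $v$ leaves a single Ore extension and yields $A_{\e'} \cong B^{\dagger}$ analogously. Applying \thmref{thm.MUdag} then produces
\[
C(A_\e) \;=\; C(B^{\dagger\dagger}) \;\cong\; C(B^{\dagger}) \times C(B^{\dagger}) \;=\; C(A_{\e'}) \times C(A_{\e'}),
\]
and the second assertion $\uCM(A_\e) \cong \uCM(A_{\e'}) \times \uCM(A_{\e'})$ follows from \thmref{thm.SV} together with the fact that $\Db(\mod(R \times R)) \cong \Db(\mod R) \times \Db(\mod R)$ for any finite-dimensional algebra $R$.

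The main obstacle is the degenerate case $n = 3$, in which $\{1,\dots,n-1\}\setminus\{v,w\} = \emptyset$, so $g = 0$ and the regularity hypothesis of \thmref{thm.MUdag} fails. Here $\e_{12} = \e_{13} = \e_{23} = -1$ is forced, and I would dispose of this case by hand using \lemref{lem.c}(3): a direct calculation shows that $C(A_\e) \cong k[t_2, t_3]/(t_2^2 - 1, t_3^2) \cong k[t]/(t^2) \times k[t]/(t^2)$ while $C(A_{\e'}) \cong k[t]/(t^2)$, making the product decomposition transparent.
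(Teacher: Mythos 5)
Your proposal is correct and follows essentially the same route as the paper: the paper sets $A=A_\e/(x_v,x_w)$ (your $B$), identifies $A_{\e'}\cong A^{\dagger}$ and $A_\e\cong A^{\dagger\dagger}$, and invokes Theorem~\ref{thm.MUdag} together with Theorem~\ref{thm.SV}, handling the case $|V(G_\e)|=3$ separately (via Example~\ref{ex.C(A)}~(1) and (3), which is the same computation you carry out by hand with Lemma~\ref{lem.c}~(3)). Your explicit verification that isolated vertices force $\e_{vi}=\e_{wi}=-1$, so that $x_v,x_w$ anticommute with everything and the Ore-extension identification goes through, is exactly the content the paper leaves implicit.
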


\begin{proof}
If $|V(G_\e)|=3$, then this follows from Example \ref{ex.C(A)} (1) and (3).
Assume that $|V(G_\e)|\geq 4$. Let $A=A_\e/(x_v, x_w)$. Then $A$ is a skew graded $(A_\infty)$ hypersurface singularity such that $A_{\e'} \cong A^{\dagger}$ and $A_{\e} \cong A^{\dagger\dagger}$, so $C(A_\e) \cong C(A_{\e'}) \times C(A_{\e'})$ by Lemma \ref{thm.MUdag}. Furthermore, we obtain
\begin{align*}
\uCM(A_\e) &\cong \Db(\mod C(A_\e)) \cong \Db(\mod(C(A_{\e'}) \times C(A_{\e'})))\\
&\cong \Db(\mod C(A_{\e'})) \times \Db(\mod C(A_{\e'}))
\cong \uCM(A_{\e'}) \times \uCM(A_{\e'})
\end{align*}
by Theorem \ref{thm.SV}.
\end{proof}

Here, we prove a combinatorial lemma, which is a modification of \cite[Lemma 3.1]{HU}.
For non-negative integers $a$ and $b$, let $G(a,b)$ denote the graph with set of vertices $\{u_i,u_i' \mid i=1,\ldots,a\} \cup \{u_j'' \mid j = 1,\ldots,b\}$ and set of edges $\{u_iu_i' \mid i=1,\ldots,a\}$. Namely, $G(a,b)$ consists of $a$ isolated edges and $b$ isolated vertices. 

\begin{lem} \label{lem.rsw}
Let $G$ be a graph with $V(G)=\{1,\dots,n\}$ and $n\geq 2$.
Assume that $1$ is an isolated vertex of $G$.
Then there exists a sequence of relative switchings $\mu_{v_1 \leftarrow w_1}, \dots, \mu_{v_m \leftarrow w_m}$ such that 
\[\mu_{v_m \leftarrow w_m}\mu_{v_{m-1} \leftarrow w_{m-1}}\cdots \mu_{v_1 \leftarrow w_1}(G)\cong G(\alpha,\beta),\] 
where $v_i \in \{2,\dots,n\}, w_j \in \{2,\dots,n-1\}$ for all $1\leq i,j\leq m$, $2\alpha+\beta=n$, and $\be \geq 1$. 
\end{lem}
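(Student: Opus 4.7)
The plan is to prove the lemma by strong induction on $n$. The base case $n=2$ is immediate: vertex $1$ is isolated and $G$ has no loops, so $E(G)=\emptyset$ and $G \cong G(0,2)$ with $\alpha=0,\beta=2$.

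For the inductive step ($n\geq 3$), I would split into two cases according to whether $G$ has an edge disjoint from vertex $n$. In Case~1, suppose there exists an edge $ij \in E(G)$ with $i,j \in \{2,\dots,n-1\}$. I would carry out the standard ``hyperbolic pair'' reduction from the theory of alternating bilinear forms over $\F_2$: for each $k \in V(G) \setminus \{1,i,j\}$, apply $\mu_{k \leftarrow i}$ if the current adjacency entry $M(G)_{k,j}=1$, and then apply $\mu_{k \leftarrow j}$ if the original $M(G)_{k,i}=1$. A direct matrix computation, analogous to those in the proofs of Lemmas~\ref{lem.msw} and~\ref{lem.mrsw}, shows that the new $(k,i)$ and $(k,j)$ entries vanish while the $(i,j)$-entry is preserved. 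The constraints $v \in \{2,\dots,n\}$ and $w \in \{2,\dots,n-1\}$ are satisfied since $k \neq 1$ and $i,j \neq n$. After processing every such $k$, the pair $\{i,j\}$ is an isolated edge of the current graph, and I invoke the induction hypothesis on the induced subgraph on the remaining $n-2$ vertices (still containing vertex $1$ as isolated and vertex $n$ in the constrained role).

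In Case~2, every edge of $G$ is incident to vertex $n$, so $N_G(u) \subseteq \{n\}$ for all $u \in V(G)\setminus\{n\}$. If $|N_G(n)|\leq 1$ then $G$ is already of the form $G(0,n)$ or $G(1,n-2)$. Otherwise, fix any $u_1 \in N_G(n)$ and, for each other $u_s \in N_G(n)$, apply $\mu_{u_s \leftarrow u_1}$. Since $N_G(u_s)=\{n\}=N_G(u_1)$, the new neighborhood $(N_G(u_s)\setminus N_G(u_1)) \cup (N_G(u_1) \setminus N_G(u_s))$ is empty, so $u_s$ becomes isolated. The constraints $u_s \neq 1$ and $u_1 \neq n$ hold because $u_s,u_1 \in N_G(n)$. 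The result is $G(1,n-2)$.

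Two concluding remarks: first, vertex $1$ remains isolated throughout the entire sequence of switchings, because $\mu_{v \leftarrow w}$ alters only $N(v)$ and $1 \notin N(v)\cup N(w)$ at every stage (an easy induction on the number of switchings applied); this guarantees $\beta \geq 1$ in every case. Second, the identity $2\alpha+\beta=n$ follows from vertex counting. The step I expect to be the main obstacle is Case~1: the restriction $w \neq n$ forbids using vertex $n$ as a pivot, so the hyperbolic-pair reduction must rely on an edge internal to $\{2,\dots,n-1\}$; the crucial realization that makes the approach work is that whenever no such internal edge exists (Case~2), the residual structure is so constrained that a direct and completely elementary argument via $\mu_{u_s \leftarrow u_1}$ always suffices.
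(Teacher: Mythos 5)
Your proof is correct, but it follows a genuinely different induction than the paper's. The paper peels off the last vertex: it applies the induction hypothesis to $G\setminus\{n\}$, transports the resulting switching sequence back to $G$ (so that $G_1\setminus\{n\}\cong G(\alpha',\beta')$ while the edges at $n$ are a priori uncontrolled), and then performs an explicit three-stage cleanup of the edges incident to $n$ via $\mu_{n\leftarrow u_p'}$, $\mu_{n\leftarrow u_q}\mu_{n\leftarrow u_q'}$ and $\mu_{u_r''\leftarrow u_1''}$. You instead run the classical hyperbolic-pair reduction for alternating forms over $\F_2$, arranging for the pivot edge $ij$ to lie inside $\{2,\dots,n-1\}$ so the constraint $w\neq n$ is automatic, and disposing of the remaining case (all edges form a star at $n$) by the elementary observation that $N(u_s)=N(u_1)=\{n\}$ forces $\mu_{u_s\leftarrow u_1}$ to isolate $u_s$. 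Your route buys a cleaner recursion -- since $\{i,j\}$ becomes a genuinely isolated edge, the switchings from the recursive call on $G\setminus\{i,j\}$ act identically on the full graph, whereas the paper must track how $n$ reattaches to $G(\alpha',\beta')$ -- at the cost of a two-case split; the paper's version keeps vertex $n$ in the distinguished ``last'' role throughout, which makes the index constraints more visibly preserved. Two small points you should make explicit: (i) $\mu_{v\leftarrow w}$ preserves the adjacency between $v$ and $w$ (since $w\notin N_G(w)$), which is exactly why ``the original $M(G)_{k,i}$'' still equals the current entry after applying $\mu_{k\leftarrow i}$; and (ii) in Case~1 the recursion terminates correctly because an internal edge requires $n\geq 4$, so the induced subgraph on the remaining $n-2\geq 2$ vertices is within the scope of the induction hypothesis.
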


\begin{proof}
We prove the claim by induction on $n$. If $n=2$, then the claim is trivial because $G$ is already equal to $G(0,2)$. Suppose that $n\geq3$. Let $G'=G\setminus\{n\}$.
By the induction hypothesis, there exists a sequence of relative switchings $\mu_{v_1 \leftarrow w_1}, \dots, \mu_{v_h \leftarrow w_h}$ such that 
\[\mu_{v_h \leftarrow w_h}\mu_{v_{h-1} \leftarrow w_{h-1}}\cdots \mu_{v_1 \leftarrow w_1}(G')\cong G(\alpha',\beta'),\]
where $v_i \in \{2,\dots,n-1\}, w_j \in \{2,\dots,n-2\}$ for all $1\leq i,j\leq h$, $2\alpha'+\beta'=n-1$, and $\be' \geq 1$.
Let 
\[G_1= \mu_{v_h \leftarrow w_h}\mu_{v_{h-1} \leftarrow w_{h-1}}\cdots \mu_{v_1 \leftarrow w_1}(G).\]
Then $G_1\setminus\{n\}\cong G(\alpha',\beta')$ and $1$ is still an isolated vertex of $G_1$.
Thus $G_1$ has the form
\begin{align*}&\xymatrix@R=0.8pc@C=0.5pc{
&&&&n \ar@{-}[ddllll] \ar@{-}[ddll] \ar@{-}[ddl] \ar@{-}[ddddl] \ar@{-}[ddr] \ar@{-}[ddddr] \ar@{-}@/^25pt/[dddrrrrrr] \ar@{-}@/^25pt/[dddrrrrrrrr]
&&&&&&&& 1\\
\\
u_1 \ar@{-}[dd]& &u_p \ar@{-}[dd] &u_{p+1} \ar@{-}[dd]&&u_q \ar@{-}[dd] &u_{q+1} \ar@{-}[dd] & &u_{\alpha'} \ar@{-}[dd]\\
 &\cdots &&&\cdots &&&\cdots &&&u''_1 &\cdots &u''_r &u''_{r+1} &\cdots &u''_{\beta'-1},\\
u_1' & &u_p'&u_{p+1}'&&u_q'&u_{q+1}' &&u'_{\alpha'}
}
\end{align*}
where $\{u_1, u_1',\dots, u_{\al'}, u_{\al'}'\} \cup \{u_1'',\dots, u_{\be'-1}''\}=\{2,\dots,n-1\}$ and
the set of edges of $G_1$ is 
\[
\{u_1u_1',\dots, u_{\al'}u_{\al'}'\} \cup \{nu_1,\dots,nu_p\} \cup \{nu_{p+1}, nu_{p+1}', \dots, nu_{q}, nu_{q}'\} \cup \{nu_1'',\dots,nu_r''\}
\]
for some $0 \leq p \leq q\leq \al'$ and $0 \leq r\leq \be'-1$.

Let 
\begin{align*}
&G_2=\mu_{n \leftarrow u_p'}\dots\mu_{n \leftarrow u_1'}(G_1),\\
&G_3=
\mu_{n \leftarrow u_{q}}\mu_{n \leftarrow u_{q}'}\dots\mu_{n \leftarrow u_{p+1}} \mu_{n \leftarrow u_{p+1}'}(G_2),\\
&G_4=
\mu_{u_r'' \leftarrow u_1''}\dots\mu_{u_2'' \leftarrow u_1''}(G_3).
\end{align*}
Note that $\mu_{n \leftarrow u_p'}\dots\mu_{n \leftarrow u_1'}$ removes edges $nu_1,\dots,nu_p$ from $G_1$, $\mu_{n \leftarrow u_{q}}\mu_{n \leftarrow u_{q}'}\dots\mu_{n \leftarrow u_{p+1}} \mu_{n \leftarrow u_{p+1}'}$ removes edges $nu_{p+1}, nu_{p+1}', \dots, nu_{q}, nu_{q}'$ from $G_2$, and $\mu_{u_r'' \leftarrow u_1''}\dots\mu_{u_2'' \leftarrow u_1''}$ removes edges $nu_2'',\dots,nu_r''$ from $G_3$.
Hence $G_4$ has the form
\begin{align*}
&\xymatrix@R=0.8pc@C=0.5pc{
&&&&n \ar@{-}@/^30pt/[dddrrrrrr] &&&&&&&& 1\\
\\
u_1 \ar@{-}[dd]& &u_p \ar@{-}[dd] &u_{p+1} \ar@{-}[dd]&&u_q \ar@{-}[dd] &u_{q+1} \ar@{-}[dd] & &u_{\alpha'} \ar@{-}[dd]\\
 &\cdots &&&\cdots &&&\cdots &&&u''_1 &\cdots &u''_r &u''_{r+1} &\cdots &u''_{\beta'-1},\\
u_1' & &u_p'&u_{p+1}'&&u_q'&u_{q+1}' &&u'_{\alpha'}\\
}
\end{align*}
where the set of edges of $G_4$ is 
$\{u_1u_1',\dots, u_{\al'}u_{\al'}'\} \cup \{nu_1''\}$ if $r\geq1$ and $\{u_1u_1',\dots, u_{\al'}u_{\al'}'\}$ if $r=0$.
That is, $G_4$ is isomorphic to $G(\alpha'+1,\beta'-1)$ if $r \geq 1$ and $G(\alpha',\beta'+1)$ if $r=0$. 
Therefore, we have proved the claim.
\end{proof}

\section{Proofs of Theorem \ref{thm.sAinftys} and Corollaries \ref{cor.sAinfty} and \ref{cor.sAinfty2}}\label{sec.p}

In this section, we provide proofs of Theorem \ref{thm.sAinftys} and Corollaries \ref{cor.sAinfty} and \ref{cor.sAinfty2}.

\begin{proof}[\textit{\textbf{Proof of Theorem \ref{thm.sAinftys}}}]
Let $N_{G_\e}(1)=\{u_1,\dots,u_h\}$ and define
\[ G_{\e'} := \mu_{u_h}\cdots \mu_{u_1}(G_\e).\]
Then $1$ is an isolated vertex of $G_{\e'}$. By Lemma \ref{lem.rsw},  there exists a sequence of relative switchings $\mu_{v_1 \leftarrow w_1}, \dots, \mu_{v_m \leftarrow w_m}$ such that 
\[
G_{\e''}:=\mu_{v_m \leftarrow w_m}\cdots \mu_{v_1 \leftarrow w_1}(G_{\e'})\cong G(\alpha,\beta),
\]
where $v_i \in \{2,\dots,n\}, w_j \in \{2,\dots,n-1\}$ for all $1\leq i,j\leq m$, $2\alpha+\beta=n$, and $\be \geq 1$.

By switching theorem (Theorem \ref{thm.S1}) and relative switching theorem (Theorem \ref{thm.S2}), we have
\begin{align}\label{eq.srs}
\uCM(A_\e)  \cong \uCM(A_{\e'})  \cong \uCM(A_{\e''}).
\end{align}
By Lemmas \ref{lem.msw} (1) and \ref{lem.mrsw} (1),
\begin{align}\label{eq.null}
r=\mnull_{\F_2}(\De_\e)= \mnull_{\F_2}(\De_{\e'})= \mnull_{\F_2}(\De_{\e''})=\mnull_{\F_2}(\De(G(\alpha,\beta)))= \beta-1
\end{align}
(note that $\be \geq1$). Moreover, by Lemmas \ref{lem.msw} (2) and \ref{lem.mrsw} (2),
\begin{align}\label{eq.L}
\De_\e\ \textnormal{satisfies (L)}\quad \Longleftrightarrow \quad
\De_{\e'}\ \textnormal{satisfies (L)} \quad \Longleftrightarrow \quad
\De_{\e''}\ \textnormal{satisfies (L)}.
\end{align}

(1) Since $\De_\e$ satisfies (L), so does $\De_{\e''}$ by \eqref{eq.L}, that is, the $n$-th column of $\De_{\e''}$ can be expressed as a linear combination of the other columns.
Since $G_{\e''} \cong G(\alpha,\beta)$ and 1 is an isolated vertex of $G_{\e''}$, we see that the $n$-th column of $\De_{\e''}$ is equal to $\left(\begin{smallmatrix} 0 \\ \vdots \\0 \\1 \end{smallmatrix}\right)$, so $n$ is an isolated vertex of $G_{\e''}$.

Using Kn\"orrer reduction (Theorem \ref{thm.R1}) $\al$ times and two point reduction (Theorem \ref{thm.R2}) $\be-2$ times, we have
\begin{align}\label{eq.red1}
\uCM(A_{\e''}) \cong \underbrace{\uCM(A_{\e'''}) \times \cdots \times \uCM(A_{\e'''})}_{2^{\be-2}},
\end{align}
where
\begin{align*}
G_{\e'''}:=\;\;\xymatrix@R=1.5pc@C=1.5pc{1 &n} \qquad(\textnormal{two isolated vertices}).
\end{align*}
Since $A_{\e'''}\cong k\ang{x_1,x_2}/ (x_1x_2+x_2x_1,\,x_1^2)$,
Example \ref{ex.C(A)} (1) and Theorem \ref{thm.SV} imply
\begin{align}\label{eq.La}
\uCM(A_{\e'''}) \cong \Db(\mod \La).
\end{align}
Hence we conclude 
\begin{align*}
\uCM(A_\e)  &\cong  \underbrace{\uCM(A_{\e'''}) \times \cdots \times \uCM(A_{\e'''})}_{2^{\be-2}} \qquad (\textnormal{by \eqref{eq.srs} and \eqref{eq.red1}} )\\
&\cong  \underbrace{\Db(\mod \La) \times \cdots \times \Db(\mod \La)}_{2^{\be-2}} \qquad (\textnormal{by \eqref{eq.La}} )\\
&\cong  \Db(\mod \La^{2^{\be-2}})\\
&\cong \Db(\mod \La^{2^{r-1}}) \qquad (\textnormal{by \eqref{eq.null}}),
\end{align*}
as desired.

(2) Since $\De_\e$ does not satisfy (L), it follows from \eqref{eq.L} that $\De_{\e''}$ does not satisfy (L), that is, the $n$-th column of $\De_{\e''}$ cannot be expressed as a linear combination of the other columns.
Since $G_{\e''} \cong G(\alpha,\beta)$ and 1 is an isolated vertex of $G_{\e''}$, we see that the $n$-th column of $\De_{\e''}$ is not equal to $\left(\begin{smallmatrix} 0 \\ \vdots \\0 \\1 \end{smallmatrix}\right)$, so $n$ is not an isolated vertex of $G_{\e''}$. In other words, $n$ constitutes an isolated edge of $G_{\e''}$.

Using Kn\"orrer reduction (Theorem \ref{thm.R1}) $\al-1$ times and two point reduction (Theorem \ref{thm.R2}) $\be-1$ times, we have
\begin{align}\label{eq.red2}
\uCM(A_{\e''}) \cong \underbrace{\uCM(A_{\e'''}) \times \cdots \times \uCM(A_{\e'''})}_{2^{\be-1}}
\end{align}
where
\[G_{\e'''}:=\;\;\xymatrix@R=1.5pc@C=1.5pc{1 &i \ar@{-}[r] &n} \qquad(\textnormal{one isoleted vertex and one isolated edge})\]
for some $i$. Let $G_{\e''''}= \mu_1(G_{\e'''})$. Then
\begin{align}\label{eq.Ga1}
\uCM(A_{\e'''}) \cong \uCM(A_{\e''''})
\end{align}
by switching theorem (Theorem \ref{thm.S1}). 
Since $A_{\e''''}\cong k[x_1,x_2,x_3]/ (x_1^2+x_2^2)$,
Example \ref{ex.C(A)} (2) and Theorem \ref{thm.SV} imply
\begin{align}\label{eq.Ga2}
\uCM(A_{\e''''}) \cong \Db(\mod \Ga).
\end{align}
Hence we conclude 
\begin{align*}
\uCM(A_\e)  &\cong \underbrace{\uCM(A_{\e'''}) \times \cdots \times \uCM(A_{\e'''})}_{2^{\be-1}} \qquad (\textnormal{by \eqref{eq.srs} and \eqref{eq.red2}} )\\
&\cong  \underbrace{\Db(\mod \Ga) \times \cdots \times \Db(\mod \Ga)}_{2^{\be-1}} \qquad (\textnormal{by \eqref{eq.Ga1} and \eqref{eq.Ga2}} )\\
&\cong  \Db(\mod \Ga^{2^{\be-1}})\\
&\cong \Db(\mod \Ga^{2^{r}}) \qquad (\textnormal{by \eqref{eq.null}}),
\end{align*}
as desired.
\end{proof}

\begin{proof}[\textit{\textbf{Proof of Corollary \ref{cor.sAinfty}}}]
By \cite[Theorem A]{BoGS}, we see that $\La$ and $\Ga$ are derived-discrete algebras.
Furthermore, it follows from \cite[Theorem B]{BoGS} that $\Db(\mod \La)$ and $\Db(\mod \Ga)$ have countably many indecomposable objects. Thus $\Db(\mod \La^{2^{r-1}})$ and $\Db(\mod \Ga^{2^{r}})$ also have countably many indecomposable objects.
By Theorem \ref{thm.sAinftys}, we have $\uCM(A_\e) \cong \Db(\mod \La^{2^{r-1}})$ or
$\uCM(A_\e) \cong \Db(\mod \Ga^{2^{r}})$, so in either case, $\uCM(A_\e)$ has countably many indecomposable objects. This says that $A_\e$ has countably many indecomposable non-projective graded maximal Cohen-Macaulay modules up to isomorphism, so $A_\e$ is
countable Cohen-Macaulay representation type. But $A_\e$ is not finite Cohen-Macaulay representation type. Indeed, suppose that $A_\e$ is finite Cohen-Macaulay representation type. Then $C(A_\e)$ is semisimple by Theorem \ref{thm.ft}, so $\uCM(A_\e) \cong \Db(\mod C(A_\e))$ is a semisimple triangulated category in the sense of \cite[Section 4.2]{SV}. However, since $\La^{2^{r-1}}$ and $\Ga^{2^{r}}$ are not semisimple, $\Db(\mod \La^{2^{r-1}})$ and $\Db(\mod \Ga^{2^{r}})$ are not semisimple triangulated categories. This is a contradiction.
\end{proof}

\begin{proof}[\textit{\textbf{Proof of Corollary \ref{cor.sAinfty2}}}]
By Corollary \ref{cor.sAinfty}, $A_\e$ is not finite Cohen-Macaulay representation type, so $A_\e$ is not a noncommutative graded isolated singularity by Theorem \ref{thm.ft}.
\end{proof}

In closing, we give an example.
\begin{ex}
Let us consider the case $n=4$.
Let $A_\e= k_{\e}[x_1,x_2,x_3,x_4]/ (x_1^2+x_2^2+x_3^2)$ be the skew graded $(A_\infty)$ hypersurface singularity given by
\[
\e= (\e_{ij})=
\begin{pmatrix}
1&1&-1&1\\
1&1&1&1\\
-1&1&1&-1\\
1&1&-1&1
\end{pmatrix}.
\]
Then 
\[G_{\e}= \xy /r2pc/: 
{\xypolygon4{~={90}~*{\xypolynode}~>{}}},
"1";"2"**@{-},
"1";"4"**@{-},
"2";"3"**@{-},
"2";"4"**@{-},
\endxy \qquad
\textnormal{and}
\qquad
\De_{\e}=
\begin{pmatrix}
0&1&0&1&1\\
1&0&1&1&1\\
0&1&0&0&1\\
1&1&0&0&1\\
1&1&1&1&0
\end{pmatrix}.
\]
One can check that the $4$-th column $\bm{v}_4=\left(\begin{smallmatrix}1\\1\\0\\0\\1 \end{smallmatrix}\right) \in \F_2^5$ of $\De_{\e}$ cannot be written as a linear combination of the other columns
$\bm{v}_1=\left(\begin{smallmatrix}0\\1\\0\\1\\1 \end{smallmatrix}\right), \bm{v}_2=\left(\begin{smallmatrix}1\\0\\1\\1\\1 \end{smallmatrix}\right),
\bm{v}_3=\left(\begin{smallmatrix}0\\1\\0\\0\\1 \end{smallmatrix}\right),
\bm{v}_5=\left(\begin{smallmatrix}1\\1\\1\\1\\0 \end{smallmatrix}\right)
\in \F_2^5$ of $\De_{\e}$, and $\mnull_{\F_2}(\De_{\e})=1$. Hence we have $\uCM(A_\e) \cong \Db(\mod \Ga^2)$ by Theorem \ref{thm.sAinftys}; compare with Corollary \ref{cor.sAinftys} (1) in the commutative case.
\end{ex}

\section*{Acknowledgments}
The author thanks Ryo Takahashi for valuable information on commutative rings of countable Cohen-Macaulay representation type.
The author also thanks Osamu Iyama for informing him of the results of the paper \cite{IY} in preparation.


\begin{thebibliography}{99}
\bibitem{AZ}
M. Artin and J. J. Zhang,
\textit{Noncommutative projective schemes},
Adv. Math. \textbf{109} (1994), no. 2, 228--287.

\bibitem{AR}
M. Auslander and I. Reiten,
\textit{Cohen-Macaulay modules for graded Cohen-Macaulay rings and their completions},
Commutative algebra (Berkeley, CA, 1987), Math. Sci. Res. Inst. Publ., vol. 15, Springer, New York, 1989, pp. 21--31.

\bibitem{BoGS}
G. Bobi\'nski, C. Gei\ss, and A. Skowro\'nski,
\textit{Classification of discrete derived categories}, 
Cent. Eur. J. Math. \textbf{2} (2004), no. 1, 19--49.

\bibitem{BEH}
R.-O. Buchweitz, D. Eisenbud, and J. Herzog,
\textit{Cohen-Macaulay modules on quadrics},
Singularities, representation of algebras, and vector bundles (Lambrecht, 1985),
Lecture Notes in Math., vol. 1273, Springer, Berlin, 1987, pp. 58--116.

\bibitem{BuGS}
R.-O. Buchweitz, G.-M. Greuel, and F.-O. Schreyer,
\textit{Cohen-Macaulay modules on hypersurface singularities II},
Invent. Math. \textbf{88} (1987), no. 1, 165--182.

\bibitem{BIY}
R.-O. Buchweitz, O. Iyama, and K. Yamaura,
\textit{Tilting theory for Gorenstein rings in dimension one},
Forum Math. Sigma \textbf{8} (2020), e36.

\bibitem{EH}
D. Eisenbud and J. Herzog,
\textit{The classification of homogeneous Cohen-Macaulay rings of finite representation type},
Math. Ann. \textbf{280} (1988), no. 2, 347--352.

\bibitem{GR}
C. Godsil and G. Royle, 
\textit{Algebraic graph theory}, 
Graduate Texts in Mathematics, vol. 207, Springer-Verlag, New York, 2001.

\bibitem{Hap}
D. Happel,
\textit{Triangulated categories in the representation theory of finite-dimensional algebras},
London Mathematical Society Lecture Note Series, vol. 119, Cambridge University Press, Cambridge, 1988.

\bibitem{HY}
J.-W. He and Y. Ye, 
\textit{Clifford deformations of Koszul Frobenius algebras and noncommutative quadrics},
preprint, \texttt{arXiv:1905.04699v2}.

\bibitem{HU}
A. Higashitani and K. Ueyama,
\textit{Combinatorial study of stable categories of graded Cohen-Macaulay modules over skew quadric hypersurfaces},
Collect. Math. \textbf{73} (2022), no. 1, 43--54.

\bibitem{I}
O. Iyama,
\textit{Tilting Cohen-Macaulay representations},
Proceedings of the International Congress of Mathematicians---Rio de Janeiro 2018. Vol. II. Invited lectures, World Sci. Publ., Hackensack, NJ, 2018, pp. 125--162.

\bibitem{IY}
O. Iyama and K. Yamaura,
\textit{Tilting theory for large Cohen-Macaulay modules over Gorenstein rings in dimension one},
in preparation.

\bibitem{LW}
G. Leuschke and R. Wiegand, 
\textit{Cohen-Macaulay representations},
Mathematical Surveys and Monographs, vol. 181, American Mathematical Society, Providence, RI, 2012.

\bibitem{MUk} 
I. Mori and K. Ueyama, 
\textit{Noncommutative Kn\"orrer's periodicity theorem and noncommutative quadric hypersurfaces},
to appear in Algebra Number Theory, \texttt{arXiv:1905.12266v3}.

\bibitem{S}
S. P. Smith,
\textit{Some finite-dimensional algebras related to elliptic curves},
Representation theory of algebras and related topics (Mexico City, 1994),
CMS Conf. Proc., vol. 19, American Mathematical Society, Providence, RI, 1996, pp. 315--348.

\bibitem{SV}
S. P. Smith and M. Van den Bergh,
\textit{Noncommutative quadric surfaces},
J. Noncommut. Geom. \textbf{7} (2013), no. 3, 817--856.

\bibitem{St}
B. Stone, 
\textit{Non-Gorenstein isolated singularities of graded countable Cohen-Macaulay type},
Connections Between Algebra, Combinatorics, and Geometry,
Springer Proc. Math. Stat., vol. 76, Springer, New York, 2014, pp. 299--317.

\bibitem{U} 
K. Ueyama,
\textit{Graded maximal Cohen-Macaulay modules over noncommutative graded Gorenstein isolated singularities}, 
J. Algebra \textbf{383} (2013), 85--103. 

\bibitem{Y}
Y. Yoshino,
\textit{Cohen-Macaulay Modules over Cohen-Macaulay Rings},
London Mathematical Society Lecture Note Series, vol. 146. Cambridge University Press, Cambridge, 1990.
\end{thebibliography}
\end{document}